\newtheorem*{notation*}{Notation}
  \newenvironment{proofclaim}{\begin{proof}[Proof of claim]}{\end{proof}}
\newenvironment{customthm}[1]
  {\innercustomthm}
  {\endinnercustomthm}
\newcounter{quote}
\hfill\parbox{\dimexpr \textwidth-2cm}
{\centering\small\textit{\BODY}}
\hfill\llap{(\thequote)}\vspace{2ex}\par}
\def\Ind#1#2{#1\setbox0=\hbox{$#1x$}\kern\wd0\hbox to 0pt{\hss$#1\mid$\hss}
\lower.9\ht0\hbox to 0pt{\hss$#1\smile$\hss}\kern\wd0}
\def\notind#1#2{#1\setbox0=\hbox{$#1x$}\kern\wd0
\hbox to 0pt{\mathchardef\nn=12854\hss$#1\nn$\kern1.4\wd0\hss}
\hbox to 0pt{\hss$#1\mid$\hss}\lower.9\ht0 \hbox to 0pt{\hss$#1\smile$\hss}\kern\wd0}
\newtheorem{theorem}{Theorem}[section]
\newtheorem{lemma}[theorem]{Lemma}
\newtheorem{fact}[theorem]{Fact}
\newtheorem*{theorem*}{Theorem}
\newtheorem*{maintheorem*}{Main Theorem}
\newtheorem*{lemma*}{Lemma}
\newtheorem{corollary}[theorem]{Corollary}
\theoremstyle{definition}
\newtheorem{question}[theorem]{Question}
\newtheorem{definition}[theorem]{Definition}
\newtheorem{remark}[theorem]{Remark}
\newtheorem*{remark*}{Remark}
\newtheorem{claim}{Claim}
\newtheorem*{claim*}{Claim}
\title[Simple pseudofinite groups of dimension $3$]{Finite-dimensional pseudofinite groups of small dimension, without CFSG}
\author[U. Karhum\"{a}ki]{Ulla Karhum\"{a}ki}
\address{University of Helsinki}
\email{ukarhumaki@gmail.com}
\author[F. O. Wagner]{Frank O. Wagner}
\address{Universit\'{e} Claude Bernard Lyon 1; CNRS; Institut Camille Jordan}
\email{wagner@math.univ-lyon1.fr}
\thanks{The first author is funded by the Finnish Science Academy grant no.\ 338334.
The second author would like to acknowledge the support of the Science Committee of the Ministry of Science and Higher Education of the Republic of Kazakhstan (Grant No.\ AP19677451).}
\begin{document}

\begin{abstract}Any simple pseudofinite group $G$ is known to be isomorphic to a (twisted) Chevalley group over a pseudofinite field. This celebrated result mostly follows from the work of Wilson in 1995 and heavily relies on the classification of finite simple groups (CFSG). It easily follows that $G$ is finite-dimensional with additive and fine dimension and, in particular, that if ${\rm dim}(G)=3$ then $G$ is isomorphic to ${\rm PSL}_2(F)$ for some pseudofinite field $F$. We describe pseudofinite groups of fine and additive dimension  $\leqslant 3$ and, in particular, show that the classification $G \cong {\rm PSL}_2(F)$ is independent of CFSG.\end{abstract}

\maketitle

\section{Introduction}
\emph{Pseudofinite} groups (resp.\ fields) are infinite groups (fields) which satisfy every first-order property that is true in all finite groups (fields). In \cite{Ax1968}, Ax characterised pseudofinite fields in purely algebraic terms. Using Ax's result and the classification of finite simple groups (CFSG), Wilson proved that a simple pseudofinite group is elementarily equivalent to a (twisted) Chevalley group over a pseudofinite field  \cite{Wilson1995}. 

A structure is \emph{supersimple of finite SU-rank} if its definable sets (in some monster model) are equipped with a notion of dimension, called \emph{${\rm SU}$-rank}, taking integer values. It is well known \cite{Chatzidakis1992} that a pure pseudofinite field $F$ is supersimple of ${\rm SU}$-rank $1$. Hence, using Wilson's classification, the theory of any simple pseudofinite group $G$ is supersimple of finite ${\rm SU}$-rank \cite[Theorem 4.1]{Macpherson2018}. In particular, it is easy to see \cite[Proposition 6.1]{Elwesetal} that a simple pseudofinite group $G$ with ${\rm SU}(G)=3$ is isomorphic to ${\rm PSL}_2(F)$ for some pseudofinite field $F$. However, this observation heavily relies on CFSG. 

A more general notion was introduced by the second author in \cite[Definition 1.1]{Wagner2020}: A structure is \emph{finite-dimensional} if there is a dimension function ${\rm dim}$ from the collection of all interpretable sets (again, in some monster model) to $\mathbb{N} \cup \{-\infty\}$. In particular, supersimple structures of finite SU-rank are finite-dimensional, where the dimension is ${\rm SU}$-rank. In this case the dimension is \emph{additive} and \emph{fine} (see Section~\ref{sec:findim}). 

Our main result below describes pseudofinite groups of dimension $3$ (when the dimension is additive and fine) without invoking CFSG. As an immediate corollary (Corollary~\ref{corol:main}), we obtain that the identification above, $G \cong {\rm PSL}_2(F)$, can be proven without using CFSG. Thus we solve the problem proposed in \cite[p.\ 3, Question (3)]{Elwesetal} and in \cite[p.\ 171]{Macpherson2018}. 

\begin{theorem}[Without CFSG]\label{th:main}Let $G$ be a pseudofinite finite-dimensional group with additive and fine dimension. If ${\rm dim} (G) = 3$, then either $G$ is soluble-by-finite, or $\widetilde Z(G)$ is finite and $G/\widetilde Z(G)$ has a definable subgroup of finite index isomorphic to ${\rm PSL}_2(F)$ where $F$ is a  pseudofinite field. \end{theorem}


Let $G$ be a finite-dimensional pseudofinite group and assume that the dimension is additive and fine. To prove Theorem~\ref{th:main} we need to understand the structure of $G$ when its dimension is $1$ or $2$. It is easy to observe (see Section~\ref{sec:findim}) that $G$ satisfies the \emph{\emph{icc}$^0$-condition}: In $G$, any chain of intersections of uniformly definable subgroups, each having infinite index in its predecessor, has finite length,
bounded by some $n_\varphi$ depending only on the defining formula $\varphi$. In particular, $G$ satisfies the icc$^0$-condition for centralisers, called the \emph{$\widetilde{\mathfrak{M}}_c$-condition}, and therefore it follows from \cite[Corollaries 4.14 and 5.2]{Wagner2020} that if ${\rm dim}(G)=1$ then $G$ is finite-by-abelian-by-finite, and if ${\rm dim}(G)=2$ then $G$ is soluble-by-finite. However, only the result in the case ${\rm dim}(G)=1$ is proven independently of CFSG. Indeed, to the authors knowledge, there is no published result without CFSG which describes the structure of $G$ when ${\dim}(G)=2$. The second author has proven such result without CFSG and the pre-print where the proof appears is available at \cite{Wagner-preprint}. In this paper, we give a different proof. Namely, using similar arguments in dimension $2$ and $3$ (Section~\ref{section:identification}), we show that if $G$ is not soluble-by-finite, then in dimension $2$ we get a contradiction whereas in dimension $3$ we may identify ${\rm PSL}_2(F)$ as in Theorem~\ref{th:main}. Thus, in dimension $2$ we recover the following:

\begin{theorem}[Without CFSG, cf.\ \cite{Wagner-preprint}]\label{th:rank2} Let $G$ be a finite-dimensional pseudofinite group with additive and fine dimension. If ${\rm dim}(G) =2$, then $G$ is soluble-by-finite. Moreover, if $G$ is not finite-by-abelian-by-finite, then there is a definable normal subgroup $N$ of $G$ with ${\rm dim}(N) =1$. \end{theorem}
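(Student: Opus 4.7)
\emph{Plan.} The non-trivial statement is that $G$ is soluble-by-finite; the moreover clause is a dimension-counting consequence. I argue the first part by contradiction, aiming to produce a non-abelian definably simple section $H$ of $G$ and then apply to it the identification arguments announced in Section~\ref{section:identification}. The paper itself signposts the strategy: these arguments succeed in dimension $3$ by yielding ${\rm PSL}_2(F)$, so the task here is to run them one dimension lower and read off a contradiction from the shortfall in dimension.

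\emph{Producing and eliminating a simple section.} Suppose $G$ is not soluble-by-finite. The icc$^0$-condition, together with additivity and fineness, provides enough control on chains of definable normal subgroups to locate a largest definable normal soluble-by-finite subgroup $R$ of $G$; by hypothesis $R \neq G$. Iterating in $G/R$ and taking a minimal infinite definable normal subgroup modulo its (finite) centre yields a non-abelian definably simple section $H$ of $G$ with $\dim H \leq 2$. The CFSG-free dimension $1$ structure theorem \cite[Corollary 4.14]{Wagner2020} forces any dimension $1$ group to be finite-by-abelian-by-finite, which is incompatible with non-abelian definable simplicity; hence $\dim H = 2$. Now I would adapt the arguments of Section~\ref{section:identification} to build inside $H$ a Borel-type subgroup $B = T \ltimes U$ with $\dim T \geq 1$ and $\dim U \geq 1$, so that $\dim B \geq 2$, and then produce a Weyl-like coset outside $B$ of positive dimension, obtaining $\dim H \geq 3$, a contradiction. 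The principal obstacle, which is exactly what Section~\ref{section:identification} must supply, is carrying out this construction inside a putative dimension $2$ definably simple $H$ without assuming any ambient ${\rm PSL}_2$ structure and without quietly reintroducing CFSG.

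\emph{The dimension $1$ normal subgroup.} Assume now that $G$ is soluble-by-finite but not finite-by-abelian-by-finite. Let $G^\circ$ be the (definable, characteristic, finite-index) connected component of $G$; it is soluble with $\dim G^\circ = 2$. Its derived subgroup $N := [G^\circ, G^\circ]$ is definable in our setting, characteristic in $G^\circ$, and therefore normal in $G$. Since $G^\circ$ is soluble and non-trivial, $N \subsetneq G^\circ$; connectedness of $G^\circ$ then gives $[G^\circ : N] = \infty$, so $\dim(G^\circ/N) \geq 1$ and additivity yields $\dim N \leq 1$. If $N$ were finite, $G^\circ$ would be finite-by-abelian and hence $G$ would be finite-by-abelian-by-finite, against the hypothesis; fineness therefore gives $\dim N \geq 1$, whence $\dim N = 1$, as required.
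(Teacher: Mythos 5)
Your plan correctly identifies that the heart of the argument must be a reduction to a simple-like section followed by an application of the identification machinery of Section~\ref{section:identification}, and that Lemma~\ref{lemma:identification} will force the dimension to be $3$, a contradiction. But both halves of your write-up rely on tools that are not available in this setting.

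For the first part, you reduce to a non-abelian \emph{definably simple} section $H$ by ``taking a minimal infinite definable normal subgroup modulo its (finite) centre''. In a finite-dimensional pseudofinite group there is no descending chain condition on arbitrary definable subgroups — the icc$^0$-condition only controls intersections within a \emph{uniformly} definable family — so minimal infinite definable normal subgroups need not exist, and even if one did it would not automatically be definably simple. The paper sidesteps this entirely: it works with the weaker notion of \emph{almost simple} (Lemma~\ref{lemma:almost}, Lemma~\ref{lemma:identification}), and reaches it not by descending through normal subgroups but by a single case split. If $G$ has a definable normal subgroup of dimension $1$ the proof is finished directly (and produces the moreover clause at the same time); otherwise $\widetilde Z(G)$, being a definable normal subgroup, has dimension $0$ or $2$, and in the dimension-$0$ case quotienting by the finite $\widetilde Z(G)$ already yields an almost simple group to which Lemma~\ref{lemma:identification} applies.

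The second part has the more serious error. You invoke ``the (definable, characteristic, finite-index) connected component $G^\circ$'' and the definable derived subgroup $[G^\circ,G^\circ]$. Neither exists here: a finite-dimensional pseudofinite group need not have a smallest definable finite-index subgroup, and the derived subgroup of a definable group in this context is in general not definable (the paper only ever takes commutators of $\widetilde Z(\cdot)$, where the derived subgroup is \emph{finite} hence automatically definable by the BFC property). Consequently the whole chain — $G^\circ$ soluble, $N=[G^\circ,G^\circ]$ definable and proper, $[G^\circ:N]$ infinite by ``connectedness'' — does not get off the ground. In the paper the moreover clause is not a separate afterthought at all: it falls out of the same case split, since the ``dimension-$1$ normal subgroup'' branch simultaneously establishes solubility-by-finite and exhibits the required $N$, while the complementary branch either makes $G$ finite-by-abelian-by-finite or leads to the contradiction via Lemma~\ref{lemma:identification}.
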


Our results suggest the following question.

\begin{question}Let $G$ be a non-abelian definably simple pseudofinite group. Can one show, independently of CFSG, that $G$ is finite-dimensional with additive and fine dimension?
\end{question}

We prove our results in Section~\ref{sec:proofs} and give all necessary background results in Section~\ref{sec:bground}. In Section~\ref{section:finite-group-theory} we discuss which results from finite group theory are needed in our proofs.

\section{Backgroud results}\label{sec:bground} Let $G$ be a group. We use the following standard terminology and notation: \begin{enumerate}[-]
\item If the largest soluble (resp.\ nilpotent) normal subgroup of $G$ exists, then it is denoted by ${\rm Rad}(G)$ (resp.\ ${\rm Fitt}(G)$) and called the \emph{soluble radical} (resp.\ \emph{Fitting subgroup}) of $G$. Note that if ${\rm Rad}(G)$ or ${\rm Fitt}(G)$ exists, then it is definable \cite[Theorem 1.1]{Houcine2013}.
\item The set of involutions of $G$ is denoted by $I(G)$.
\item If $P$ and $Q$ are properties of groups then $G$ is called \emph{$P$-by-$Q$} if $G$ has a normal subgroup $N$ so that $N$ has the property $P$ and $G/N$ has the property $Q$. Note that $P$-by-($Q$-by-$R$) implies ($P$-by-$Q$)-by-$R$, but may be strictly stronger. However, finite-by-abelian-by-finite is unambiguous: If $F$ is finite normal in $A$ which is normal of finite index in $G$ and $A/F$ is abelian, then $A'$ is finite characteristic in $A$, whence normal in $G$, so (finite-by-abelian)-by-finite implies finite-by-(abelian-by-finite). Moreover, $C_G(A')$ is a normal subgroup of finite index in $G$ and $C_A(A')$ is nilpotent of class at most $2$, so $G$ is nilpotent-by-finite. Similarly, (finite-by-nilpotent)-by-finite equals nilpotent-by-finite, and (finite-by-soluble)-by-finite equals soluble-by-finite (the nilpotency class or the derived length may go up by~1).
\end{enumerate}

\subsection{Needed results from finite group theory}\label{section:finite-group-theory}As one suspects, the Feit-Thompson Theorem is used in our results. 

\begin{theorem}[Feit-Thompson \cite{Feit-Thompson1963}]\label{th:Feit-Thompson}A finite group without involutions is soluble.\end{theorem}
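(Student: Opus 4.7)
The statement is the Odd Order Theorem of Feit and Thompson, one of the deepest theorems of twentieth century algebra (the original proof occupies a full issue of the Pacific Journal and runs to roughly 250 pages). The authors of this paper rightly cite it as a black box rather than reprove it. My plan would follow the now-classical three-part strategy of the original argument, applied to a hypothetical minimal counterexample: a non-abelian finite simple group $G$ of odd order.

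The first block is local analysis. Using Hall's theorem on solvable groups together with the odd-order hypothesis, one obtains that every proper subgroup of $G$ is solvable, so $G$ is a minimal simple group. One then studies the structure of maximal subgroups via the action of elementary abelian $p$-subgroups, producing refined tools such as Thompson's transitivity theorem and the Thompson subgroup $J(P)$ with its associated factorisation and uniqueness results; the upshot is a very restricted classification of maximal subgroups of $G$ by ``types'' (the Feit-Thompson types $\mathcal S$, $\mathcal T$, $\mathcal T_1$, etc.), and in particular a unique maximal subgroup containing certain large $p$-subgroups.

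The second block is character theory. One develops the theory of exceptional characters and the Dade/Brauer-Suzuki coherence machinery for ``tamely embedded'' subgroups, to transfer arithmetic information about subgroup orders to constraints on the irreducible characters of $G$. Combined with the local classification, this forces $G$ to possess maximal subgroups of two specific types, say $M$ and $N$, of comparable order and with Frobenius-like structure.

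The final block is the generators-and-relations contradiction. From the coherence computations one extracts an element $\sigma \in G$ satisfying explicit relations incompatible with the existence of $M$ and $N$ of the prescribed shape; this is the notorious ``three-subgroup'' endgame of Chapter VI. The main obstacle is unambiguously the character-theoretic middle block: coherence is both technically delicate and conceptually subtle, and it is precisely here that the hypothesis of odd order (so that all non-principal characters come in pairs $\chi, \bar\chi$ of equal degree but $\chi \neq \bar\chi$) is exploited in an essential way. For the purposes of the present paper, I would simply quote the theorem as stated.
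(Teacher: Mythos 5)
The paper gives no proof of this statement: it is quoted as a black box with a citation to Feit--Thompson, exactly as you conclude one should do, so your approach coincides with the paper's. Your high-level sketch of the odd-order proof (local analysis and classification of maximal subgroups, coherence/exceptional-character theory, and the generator-relations endgame) is a fair summary of the original argument, but none of it is needed or reproduced in the paper.
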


The \emph{rank} of a finite group $G$ is the smallest cardinality of a generating set, and the \emph{$2$-rank} of $G$, denoted by  $m_2(G)$, is the largest rank of an elementary abelian $2$-subgroup. By the Feit-Thompson Theorem, if $m_2(G)=0$, then $G$ is solvable. If $m_2(G)=1$ them, by Burnside's transfer theorem and the Brauer-Suzuki Theorem \cite{Brauer-Suzuki1959}, the group $G$ is not simple.

Let then $G$ be a finite simple group of $m_2(G)=2$. Then the Sylow $2$-subgroups of $G$ are either dihedral groups, quasidihedral groups, wreathed groups, or isomorphic to a Sylow $2$-subgroup of the projective special unitary group ${\rm PSU}_3(4)$ \cite{Alperin-Brauer-Gorenstein1973}. Gorenstein and Walter proved that a simple group with dihedral Sylow $2$-subgroups is isomorphic either to ${\rm PSL}_2(q)$ for $q \geqslant 5$ or to the alternating group $A_7$ \cite{Gorenstein-Walter1995}. Alperin, Brauer and Gorenstein proved that a simple group with quasidiheral or wreathed Sylow $2$-subgroups is isomorphic either to ${\rm PSL}_3(q)$ or ${\rm PSU}_3(q)$ for odd $q$ or to the Mathieu group $M_{11}$ \cite{Alperin-Brauer-Gorenstein1970,Alperin-Brauer-Gorenstein1973}. Finally, Lyons showed that if a finite simple group $G$ has Sylow $2$-subgroups isomorphic to Sylow $2$-subgroups of ${\rm PSU}_3(4)$ then $G \cong {\rm PSU}_3(4)$ \cite{Lyons1972}. 

The results explained above are combined into a theorem in \cite{Gorenstein1983}:

\begin{theorem}[See e.g. {\cite[page 6]{Gorenstein1983}}]\label{th:finite2-rank2} Let $G$ be a non-abelian finite simple group with $m_2(G) \leqslant 2$. Then $m_2(G) =2$ and $G$ is isomorphic to one of the following groups $${\rm PSL}_2(q), {\rm PSL}_3(q), {\rm PSU}_3(q) \,\, {\rm for} \,\, {\rm odd} \,\, q, \,\, {\rm PSU}_3(4), A_7 \,\, {\rm or} \,\,  M_{11}.$$ \end{theorem}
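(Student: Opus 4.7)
The plan is to reduce, via deep but classical results on $2$-local structure, to a handful of Sylow-$2$ possibilities and then to quote the corresponding sporadic classification theorems of the 1960s--70s.

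First I would show that one can rule out $m_2(G)\leqslant 1$. If a Sylow $2$-subgroup $S$ of $G$ is trivial, then $G$ has odd order, so by Theorem~\ref{th:Feit-Thompson} it is soluble, contradicting simplicity and non-abelianness. If $S$ is non-trivial cyclic, Burnside's normal $p$-complement theorem forces $G$ to have a normal $2$-complement $N$ of odd order; by Feit--Thompson $N$ is soluble, so $G$ is soluble-by-cyclic and cannot be simple non-abelian. If $S$ is a non-trivial $2$-group with $m_2(S)=1$ but not cyclic, then $S$ is generalised quaternion, and the Brauer--Suzuki theorem asserts that $G/O(G)$ has a centre of order $2$, again incompatible with simplicity of $G$ non-abelian. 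This gives $m_2(G)=2$.

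Next I would classify the possible isomorphism types of $S$ under $m_2(S)=2$. The key structural fact, going back to work of MacWilliams and Alperin--Brauer--Gorenstein on $2$-groups of sectional $2$-rank at most $4$, is that a $2$-group with no elementary abelian subgroup of rank $3$ falls into a short list: cyclic, generalised quaternion (already excluded), dihedral, semidihedral (a.k.a.\ quasi-dihedral), wreathed $C_{2^n}\wr C_2$, or the Sylow $2$-subgroup of ${\rm PSU}_3(4)$. One then appeals to the major classification theorems handling each surviving case:
\begin{itemize}
\item If $S$ is dihedral, the Gorenstein--Walter theorem yields $G\cong {\rm PSL}_2(q)$ with $q$ odd, or $G\cong A_7$.
\item If $S$ is semidihedral or wreathed, the Alperin--Brauer--Gorenstein theorem yields $G\cong {\rm PSL}_3(q)$ or ${\rm PSU}_3(q)$ with $q$ odd, or $G\cong M_{11}$.
\item If $S$ is of type ${\rm PSU}_3(4)$, a signalizer functor / fusion analysis (again due to Alperin--Brauer--Gorenstein and others) identifies $G\cong {\rm PSU}_3(4)$.
\end{itemize}
Assembling these three cases exhausts the list in the statement.

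The main obstacle is clearly the second part: the determination of the possible Sylow $2$-subgroups with $m_2=2$, and, even more so, the three named classification theorems, each of which is a substantial paper in its own right and relies on intricate character-theoretic and fusion-theoretic machinery (Brauer's theory of blocks with cyclic defect, signalizer functors, and fusion systems avant la lettre). In a self-contained exposition one would have to import this technology wholesale; in practice, and as in the cited Section~1 of \cite{Gorenstein1983}, these are invoked as black boxes.
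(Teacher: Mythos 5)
The paper does not prove this theorem; it is cited as a known black-box result from \cite{Gorenstein1983}, and your reconstruction follows exactly the chain of classical results that the cited Section~1 of that book assembles, so your approach matches the one the paper implicitly relies on.

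Two small attributional imprecisions are worth flagging, since they could mislead a reader who goes to the sources. First, the reduction to the short list of Sylow $2$-subgroups for $m_2(G)=2$ is not the Gorenstein--Harada ``sectional $2$-rank at most $4$'' theorem (a much larger and later project); the correct input here is Alperin's fusion analysis together with MacWilliams' work on $2$-groups with no normal elementary abelian subgroup of rank $3$, which shows that a Sylow $2$-subgroup of a simple group of $2$-rank $2$ is dihedral, semidihedral, wreathed, or of type ${\rm PSU}_3(4)$ (the homocyclic abelian case is folded into the dihedral case via Walter's theorem, as a Klein four group). Second, the identification of $G\cong {\rm PSU}_3(4)$ from the $U_3(4)$-type Sylow $2$-subgroup is due to Lyons, not Alperin--Brauer--Gorenstein; your ``and others'' technically covers this but a direct attribution would be cleaner. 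The $m_2(G)\leqslant 1$ elimination via Feit--Thompson, Burnside's normal $p$-complement theorem, and Brauer--Suzuki is exactly right.
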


Theorem~\ref{th:finite2-rank2} is used to recognise a simple pseudofinite group over a pseudofinite field $F$ of characteristic $\neq 2$. For the case of ${\rm char}(F)=2$ we use Bender's result below. A \emph{strongly embedded} subgroup of a finite group $G$ is a proper subgroup $E$ of even order such that $E \cap E^g$ has odd order whenever $g\in G \setminus E$. 

\begin{theorem}[Bender \cite{Bender1971}]\label{th:Bender-Suzuki}Let $G$ be a finite group containing a strongly embedded subgroup. Assume that $m_2(G) > 1$ and ${\rm Rad}(G)=1$. Then $G$ has a normal subgroup which is isomorphic to one of the following groups $${\rm PSL}_2(2^n), {\rm Sz}(2^{2n-1}), {\rm PSU}_3(2^n),\,\, {\rm for} \,\, n \geqslant 2.$$ \end{theorem}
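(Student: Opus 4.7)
The plan is to exploit the defining property of a strongly embedded subgroup $E$ through the action of $G$ on the coset space $G/E$ by left multiplication. An involution $t\in G$ fixes the coset $gE$ iff $t\in E^g$, so the condition that $E\cap E^g$ has odd order for $g\notin E$ forces each involution of $G$ to lie in a \emph{unique} conjugate of $E$. Equivalently, every Sylow $2$-subgroup $S$ of $G$ is contained in a unique conjugate of $E$, and (up to conjugacy) $N_G(S)\le E$. Hence all $2$-local analysis is controlled inside~$E$.

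First I would clean up the global structure. Using $\mathrm{Rad}(G)=1$ I eliminate nontrivial solvable normal subgroups, and using Feit--Thompson (Theorem~\ref{th:Feit-Thompson}) I ensure odd-order subgroups are solvable and contribute no new simple sections. The hypothesis $m_2(G)>1$ then provides a four-group $V\le E$ of commuting involutions, which will serve as the basic $2$-local object.

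Next I would analyse the $2$-fusion. Because each involution lies in exactly one conjugate of $E$, two involutions of $E$ are $G$-conjugate iff they are already $E$-conjugate; so fusion of involutions is controlled locally in $E$. Combined with the strong embedding, this yields that $G$ acts doubly transitively on the set of conjugates of $E$, with point stabiliser $E$, and that the stabiliser of two points has odd order. Thus $G$ acquires the structure of a split $BN$-pair of rank $1$ in characteristic $2$ (a Zassenhaus-like group), with $S$ being a ``root'' subgroup of the corresponding Tits system.

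The final and hardest step is the identification. By the classification of split $BN$-pairs of rank $1$ whose point stabiliser has even order and whose two-point stabiliser has odd order, together with the $2$-local restrictions imposed by $m_2(G)>1$ and $\mathrm{Rad}(G)=1$, the only possibilities for a normal subgroup of $G$ are the simple rank-$1$ groups of Lie type in characteristic $2$, namely $\mathrm{PSL}_2(2^n)$, $\mathrm{Sz}(2^{2n-1})$ or $\mathrm{PSU}_3(2^n)$. This identification, relying on Suzuki's analysis of TI-sets of involutions and the Hering--Kantor--Seitz classification of split doubly transitive groups, is the main obstacle and is the technical heart of Bender's original argument; the preceding $2$-local reductions are comparatively soft.
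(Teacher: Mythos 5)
This is a statement the paper cites as a black box (Bender, 1971) and does not prove, so there is no in-paper argument to compare against; I am evaluating your sketch on its own terms.

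Your opening observations are correct and standard: each involution of $G$ lies in a unique conjugate of $E$, each Sylow $2$-subgroup $S$ lies in a unique conjugate of $E$ and (after conjugating) $N_G(S)\le E$, the two-point stabiliser $E\cap E^g$ has odd order by definition, and $2$-fusion is controlled in $E$ because an element conjugating one involution of $E$ to another must normalise the (unique) conjugate of $E$ containing them, hence lie in $E$. The use of $\mathrm{Rad}(G)=1$ to strip away solvable normal subgroups and of Feit--Thompson to dispose of odd-order obstructions is also reasonable preprocessing. However, the step ``Combined with the strong embedding, this yields that $G$ acts doubly transitively on the set of conjugates of $E$'' is where the argument breaks: $2$-transitivity of $G$ on $\Omega=G/E$ is not a formal consequence of involutions having unique fixed points together with local fusion control. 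It is, in essence, the main theorem of Bender's paper, obtained through a delicate counting and character-theoretic analysis of the permutation module for $G$ on $\Omega$ (together with a reduction using the $Z^*$-type machinery available at the time). You present the ``$2$-local reductions'' as comparatively soft and push all the difficulty into the final identification, but the real difficulty is already in establishing $2$-transitivity; once you have a $2$-transitive action with two-point stabiliser of odd order and a TI Sylow $2$-subgroup acting semiregularly off the fixed point, you are genuinely close to the Suzuki/Zassenhaus classification.

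Two smaller points. First, passing from $2$-transitivity to a \emph{split} $BN$-pair of rank $1$ requires showing that $S$ acts regularly, not merely semiregularly, on $\Omega\setminus\{E\}$ (equivalently, that $E$ has a normal subgroup regular on the remaining points); this is additional work, not automatic. Second, appealing to the Hering--Kantor--Seitz classification of split $2$-transitive groups is anachronistic: that theorem postdates Bender's 1971 paper, and Bender instead leans on Suzuki's earlier analysis of Zassenhaus groups and of groups with independent Sylow $2$-subgroups. This does not affect logical validity if you are content to cite a later CFSG-free classification of rank-$1$ split $BN$-pairs, but it misdescribes the architecture of Bender's own argument, which is what the theorem's attribution promises.
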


\subsection{Commensurability and almost operators}\label{subsec:com} Let $G$ be a group, and $H, K \leqslant G $. The group $H$ is said to be \emph{almost contained} in $K$, written $H \lesssim K$, if $H \cap K$ has finite index in $H$. The subgroups $H,K \leqslant G$ are \emph{commensurable} if both $H \lesssim K$ and $K \lesssim H$. This is denoted by $H \simeq K$. If $H$ and $K$ are not commensurable then we write $H \not\simeq K$. A family $\mathcal{H}$ of subgroups of $G$ is \emph{uniformly commensurable} if there is $n\in \mathbb{N}$ so that $|H_1 : H_1\cap H_2| < n$ for all $H_1,H_2 \in \mathcal{H}$. Likewise, $K \leqslant G$ is uniformly commensurable to $\mathcal{H}$ if and only if $\mathcal{H}$ is uniformly commensurable and $K$ is commensurable to some (equiv.\ any) group in $\mathcal{H}$. The following result is due to Schlichting but the formulation we give here can be found in \cite[Theorem 4.2.4]{Wagner2000}.

\begin{theorem}[Schlichting's Theorem]\label{th:commensurable}Let $G$ be a group and $\mathcal{H}$ a uniformly commensurable family of subgroups of $G$. Then there is a subgroup $N$ of $G$ which is uniformly commensurable with all members of $\mathcal{H}$ and is invariant under all automorphisms of $G$ which fix $\mathcal{H}$ setwise. \end{theorem}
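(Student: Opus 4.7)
The plan is to construct $N$ via a Schlichting-style argument that balances between intersections (which may become too small) and joins (which may become too large), extracting a canonical subgroup derived intrinsically from $\mathcal{H}$ so that automorphism-invariance is automatic.

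Let $n$ be the uniform commensurability bound. The first step is to consider the collection $\mathcal{C}$ of all subgroups $L \leqslant G$ that are uniformly commensurable with the whole family $\mathcal{H}$---i.e., $|L : L \cap K|, |K : L \cap K| < n^*$ for every $K \in \mathcal{H}$, with some $n^*$ depending only on $n$. Then $\mathcal{H} \subseteq \mathcal{C}$, and the aim is to exhibit a canonical element $N \in \mathcal{C}$ fixed by every automorphism of $G$ permuting $\mathcal{H}$. My strategy is to work in the lattice of commensurability classes of elements of $\mathcal{C}$: one shows that the uniform bound $n$ forces this lattice to have bounded structure (the technical heart of Schlichting's argument) and then uses an extremality/fixed-point argument to extract a canonical class, and from it a canonical representative. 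Automorphism-invariance then follows because the defining property of $N$ is stated purely in terms of $\mathcal{H}$ and the group operations of $G$.

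The main obstacle is the technical step controlling the commensurability structure of $\mathcal{C}$. Naive candidates for $N$ fail in general: finite intersections $\bigcap_{K \in \mathcal{F}} K$ can have arbitrarily large index in the members of $\mathcal{H}$ (for instance, with coordinate hyperplanes in an infinite-dimensional $\F_p$-vector space, where every finite intersection of $m$ hyperplanes has index $p^m$), while the generated subgroup $\langle \mathcal{H}\rangle$ can have infinite index over its members (for instance, with two order-$2$ reflections generating the infinite dihedral group). Schlichting's construction therefore requires a subtle argument beyond pure lattice-theoretic manipulation; I would follow the approach of Wagner's \emph{Simple Theories} (Theorem 4.2.4), which passes through an auxiliary action of $G$ on the disjoint union of coset spaces $\bigsqcup_{K \in \mathcal{H}} G/K$ (where uniform commensurability translates into every element having orbits of uniformly bounded size) and then applies a Neumann-/Dicman-type lemma to extract a subgroup with "bounded action" that is canonically defined from $\mathcal{H}$.
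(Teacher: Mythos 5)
The paper does not prove Schlichting's Theorem; it is stated as a cited fact, with the remark that the formulation is taken from Wagner's \emph{Simple Theories} (Theorem~4.2.4), so there is no in-paper proof to compare against. Your proposal is accordingly not a proof but a plan: you correctly identify the obstacles (finite intersections $\bigcap\mathcal F$ can have unbounded index, the full join $\langle\mathcal H\rangle$ can be too large, and your counterexamples are apt) and you identify the right reference, but the technical heart --- how to manufacture, from the uniform bound $n$, a canonical representative of the commensurability class --- is deferred to an unspecified ``Neumann-/Dicman-type lemma.'' That is precisely the content of the theorem, so as a proof attempt the proposal has a genuine gap rather than an alternative route.

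Moreover, the one concrete claim you make about the intended argument is not correct as stated: acting on $\bigsqcup_{K\in\mathcal H}G/K$, a single element $g$ need not have orbits of bounded size (they can be infinite). Even replacing ``element'' by ``subgroup $H\in\mathcal H$,'' only the $H$-orbit of the identity coset in $G/K$ has size $[H:H\cap K]\le n$; the $H$-orbit of $gK$ has size $[H:H\cap K^g]$, and $K^g$ need not lie in $\mathcal H$, so uniform commensurability of $\mathcal H$ alone controls nothing unless the family is additionally conjugation-invariant (which the statement does not assume). The Bergman--Lenstra/Wagner argument does not run through orbit counting; it introduces $\delta(A,B)=\log[A:A\cap B]+\log[B:A\cap B]$, establishes a triangle inequality via $[A:A\cap B\cap C]\le[A:A\cap B]\cdot[B:B\cap C]$, and then extracts $N$ by an extremality/lattice argument inside the bounded $\delta$-ball around $\mathcal H$, manufacturing a canonical subgroup from suitably chosen intersections and joins. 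If you intend to supply a proof rather than a citation (as the paper itself does), you need to carry out that construction explicitly; what you have written records the difficulty without resolving it.
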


\begin{definition}Let $G$ be a group, and $H, K \leqslant G $.  We define the following subgroups:\begin{enumerate}
\item $\widetilde{N}_K(H)=\{k\in K : H\simeq  H^k\}$ is the \emph{almost normaliser} of $H$ in $K$. 
\item $\widetilde{C}_K(H)=\{k\in K : H  \lesssim  C_H(k) \}$ is the \emph{almost centraliser} of $H$ in $K$.
\item $\widetilde{Z}(G)= \widetilde{C}_G(G)$ is the \emph{almost centre} of $G$. 
\end{enumerate}The commensurability is \emph{uniform} in $\widetilde{N}_K(H)$ (resp.\ in $\widetilde{C}_K(H)$) if there is some $m\in \mathbb{N}$ so that if $H\simeq  H^k$ then $|H : H\cap H^k| < m$ (resp.\ if $H  \lesssim  C_H(k)$ then $|H : C_H(k)| < m$). \end{definition}

Notice that given a group $G$ and definable subgroups $H, K$ of $G $ so that the commensurability is uniform in $\widetilde{N}_K(H)$ (resp.\ in $\widetilde{C}_K(H)$), then $\widetilde{N}_K(H)$ (resp.\ $\widetilde{C}_K(H)$) is a definable subgroup of $G$.

If $\widetilde N_G(H)=G$ we say that $H$ is {\em almost normal} in $G$. 

\begin{fact}[Hempel {\cite[Theorem 2.17]{Hempel2020}}]\label{fact:com:sym}Let $G$ be a group and $H, K \leqslant G $ be definable subgroups so that the commensurability is uniform in $ \widetilde{C}_K(H)$ and in $\widetilde{C}_H(K)$. Then $H \lesssim  \widetilde{C}_H(K)$ if and only if $K \lesssim \widetilde{C}_K(H)$. \end{fact}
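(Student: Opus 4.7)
By symmetry, it suffices to prove one implication. Assume $K \lesssim \widetilde{C}_K(H)$ and aim at $H \lesssim \widetilde{C}_H(K)$. Set $K_0 := \widetilde{C}_K(H)$ (of finite index in $K$) and, using the uniformity hypothesis, fix $m$ with $[H : C_H(k)] < m$ for every $k \in K_0$.

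The first observation is that each $k \in K_0$ almost-normalises $H$: from $C_H(k) = C_H(k)^k \subseteq H^k$ we obtain $[H : H \cap H^k] \leq [H : C_H(k)] < m$, so the family $\{H\} \cup \{H^k : k \in K_0\}$ is uniformly commensurable. Schlichting's theorem (Theorem~\ref{th:commensurable}) then produces a subgroup $\widetilde H \leq G$ commensurable with $H$ and normalised by $K_0$. Conjugation now makes $K_0$ act on $\widetilde H$, and since $\widetilde H \simeq H$ the centralisers $\{C_{\widetilde H}(k) : k \in K_0\}$ form a uniformly commensurable, $K_0$-invariant family of subgroups of $\widetilde H$ (conjugating $C_{\widetilde H}(k)$ by $k' \in K_0$ gives $C_{\widetilde H}(k^{k'})$, still in the family). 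A second application of Schlichting yields a subgroup $N \leq \widetilde H$ of finite index, normalised by $K_0$ and commensurable with every $C_{\widetilde H}(k)$.

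The decisive step is then to upgrade ``$K_0$ normalises $N$'' to ``$K_0$ almost-centralises $N$'', i.e.\ to exhibit a finite-index subgroup $L \leq K_0$ that centralises $N$ pointwise. Once this is done, for every $h \in N$ one has $C_K(h) \supseteq L$ and hence $h \in \widetilde{C}_H(K)$; combined with $[H : H \cap N] < \infty$ (which follows from $\widetilde H \simeq H$ and $[\widetilde H : N] < \infty$) this gives $H \lesssim \widetilde{C}_H(K)$, as required.

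This last step is the main obstacle, because Schlichting produces an invariant subgroup rather than a pointwise almost-fixed one. In the pseudofinite regime the point is clinched by a double-count over the incidence set $\{(h,k) \in N \times K_0 : [h,k] = 1\}$: its fibres over $K_0$ all have size at least $|N|/m''$ for a uniform $m''$, so averaging produces a finite-index subset of $N$ whose elements are centralised by a finite-index portion of $K_0$. In the abstract setting, the uniformity of commensurability in both $\widetilde{C}_K(H)$ and $\widetilde{C}_H(K)$ makes the statement first-order and one transfers via a compactness or ultraproduct argument to the pseudofinite case; alternatively, one iterates Schlichting on the $K_0$-invariant family $\{C_N(k) : k \in K_0\}$ and uses the uniform bound $m$ to control $K_0/C_{K_0}(N)$. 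The uniformity of commensurability in $\widetilde{C}_H(K)$, unused up to this point, is what ensures that the finite-index witness produced lies inside a \emph{definable} subgroup with a uniform bound, which is exactly what is required to close the argument in any group-theoretic context covered by the fact.
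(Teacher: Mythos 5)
The paper does not prove this statement---it is imported as a Fact from Hempel \cite{Hempel2020}, so there is no in-paper argument to compare against. Judged on its own merits, your proposal has a genuine gap at precisely the point you flag as the decisive step. The two Schlichting applications are sound: you obtain $\widetilde H \simeq H$ normalised by $K_0 := \widetilde C_K(H)$, and then $N \leqslant \widetilde H$ of finite index, normalised by $K_0$, with $[N : C_N(k)]$ uniformly bounded over $k \in K_0$. But the jump from ``every $k \in K_0$ centralises a uniformly-finite-index subgroup of $N$'' to ``a finite-index subgroup of $N$ lies in $\widetilde C_H(K)$'' is the entire content of the theorem, and none of your three suggestions closes it. The double-count works only for (ultraproducts of) finite groups, whereas the fact is stated for arbitrary groups. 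The ``transfer via compactness to the pseudofinite case'' runs the wrong way: \L o\'s and compactness take you from finite groups to pseudofinite ones, not from an arbitrary group to a pseudofinite one. And ``iterate Schlichting to control $K_0/C_{K_0}(N)$'' is not an argument, since a priori $C_{K_0}(N)$ can be trivial even under the hypotheses.

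In fact the implication has a short elementary proof, via B.~H.~Neumann's covering lemma, that bypasses Schlichting entirely. Assume $K_0 := \widetilde C_K(H)$ has finite index in $K$, with uniform bound $m$ so that $[H : C_H(k)] < m$ for all $k \in K_0$. If $[H : \widetilde C_H(K)] \geqslant m$, pick $h_1, \dots, h_m \in H$ in pairwise distinct left cosets of $\widetilde C_H(K)$. For each $k \in K_0$ the bound $[H : C_H(k)] < m$ forces two of the $h_i$ into a common coset of $C_H(k)$, so $h_i^{-1}h_j \in C_H(k)$ for some $i < j$; hence $K_0 = \bigcup_{i<j} C_{K_0}(h_i^{-1}h_j)$. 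By Neumann's lemma one of these subgroups has finite index in $K_0$, hence in $K$, so the corresponding $h_i^{-1}h_j$ lies in $\widetilde C_H(K)$, contradicting the choice of the $h_\ell$. Thus $[H : \widetilde C_H(K)] < m$. Note this direction uses only the uniformity in $\widetilde C_K(H)$; the uniformity in $\widetilde C_H(K)$ is used symmetrically for the converse.
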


\subsection{Finite-dimensional groups}\label{sec:findim} A group $G$ is called \emph{finite-dimensional} if there is a dimension function ${\rm dim}$ from the collection of all interpretable sets in models of ${\rm Th}(G)$ to $\mathbb{N} \cup \{-\infty\}$ such that, for any formula $\phi(x,y)$ and interpretable sets $X$ and $Y$, the following hold: \begin{enumerate}
\item Invariance: If $a\equiv a'$ then ${\rm dim}(\phi(x,a))={\rm dim}(\phi(x,a'))$.
\item Algebraicity: If $X \neq \emptyset$ is finite then ${\rm dim}(X)=0$, and ${\rm dim}(\emptyset)=-\infty$.
\item Union: ${\rm dim}(X \cup Y)={\rm max}\{{\rm dim}(X), {\rm dim}(Y)\}$.
\item Fibration: If $f: X \rightarrow Y$ is an interpretable map such that ${\rm dim}(f^{-1}(y)) \geqslant d$ for all $y\in Y$ then ${\rm dim}(X)\geqslant {\rm dim}(Y)+d$.
\end{enumerate} The dimension of a tuple $a$ of elements over a set $B$ is defined as
$${\rm dim}(a/B) := {\rm inf}\{ {\rm dim}(\phi(x)) : \phi \in {\rm tp}(a/B) \}.$$ We say that the dimension is \begin{itemize}
\item \emph{additive} if ${\rm dim}(a,b/C) = {\rm dim}(a/b,C) + {\rm dim}(b/C)$ holds for any
tuples $a$ and $b$ and for any set $C$; and
\item \emph{fine} if ${\rm dim}(X)=0$ implies that $X$ is finite. 
\end{itemize}

In this paper we work with a finite-dimensional group with additive and fine dimension. Below we list those properties of such groups which will be used repeatedly throughout the paper. For any further details on (finite-)dimensional groups we refer to \cite{Wagner2020}. 

\begin{fact}[Lascar equality]\label{fact:Lascar-eq}Let $G$ be a finite-dimensional group with additivite and fine dimension and $H \leqslant G$ be a definable subgroup. Then ${\rm dim}(G)={\rm dim}(H)+{\rm dim}(G/H)$.\end{fact}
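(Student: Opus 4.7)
The plan is to combine the fibration axiom, applied to the coset map $\pi : G \to G/H$, $g \mapsto gH$, with additivity of the dimension on tuples.

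First, I would observe that every fibre $\pi^{-1}(gH) = gH$ of $\pi$ is a translate of $H$: the map $H \to gH$ sending $x \mapsto gx$ is a definable bijection, so its fibres are singletons and hence of dimension $0$ by algebraicity. Applying the fibration axiom in both directions yields $\dim(gH) = \dim(H)$. Applying the fibration axiom to $\pi$ itself then gives
\[
\dim(G) \geqslant \dim(G/H) + \dim(H).
\]

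For the reverse inequality, I would pass to a sufficiently saturated model and pick a generic element $g \in G$ over the (finitely many) parameters $B$ defining $H$, i.e.\ an element realising $\dim(g/B) = \dim(G)$. Setting $\bar g := gH$, we have $\bar g \in G/H$ and hence $\dim(\bar g/B) \leqslant \dim(G/H)$. Since the coset $gH$ is definable from $B\bar g$, the formula ``$x \in gH$'' lies in $\text{tp}(g/B\bar g)$, forcing $\dim(g/B\bar g) \leqslant \dim(gH) = \dim(H)$. Additivity of the dimension then delivers
\[
\dim(G) = \dim(g/B) = \dim(g/B\bar g) + \dim(\bar g/B) \leqslant \dim(H) + \dim(G/H),
\]
which completes the proof when combined with the first inequality.

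The main point to verify carefully is the existence of a generic $g$ with $\dim(g/B) = \dim(G)$; this is a routine compactness argument in a saturated model, since by the union axiom $G$ cannot be covered by finitely many definable sets of dimension strictly less than $\dim(G)$, so the partial type expressing ``$x \in G$ and $\neg \varphi(x,b)$ for every $\varphi$ over $B$ with $\dim(\varphi) < \dim(G)$'' is finitely satisfiable and thus realised. Everything else is a direct application of the axiomatic properties of $\dim$, with fineness playing no explicit role in this particular statement.
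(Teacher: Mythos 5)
Your proof is correct and takes essentially the same route as the paper's, which simply invokes the coset map $G \to G/H$ and additivity (citing a reference for details); you have unpacked that citation by deriving one inequality from the fibration axiom and the other from additivity applied to a generic element, with the existence of the generic correctly justified via the union axiom and compactness.
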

\begin{proof}The map $G \rightarrow G/H$ has
fibres of dimension ${\rm dim}(H)$. Hence, by the additivity, we get ${\rm dim}(G)={\rm dim}(H)+{\rm dim}(G/H)$ \cite[Remark 1.4]{Wagner2020}. \end{proof}

\begin{lemma} Let $G$ be a finite-dimensional group with additive and fine dimension. Then $G$ satisfies the chain condition on interscetions of uniformly definable subgroups, icc$^0$:
\begin{enumerate}[icc$^0$:]
\item Given a family $\mathcal{H}$ of uniformly definable subgroups of $G$, there is $m < \omega$ so
that there is no sequence $\{H_i : i \leqslant m\} \subset \mathcal{H}$ with $|\bigcap_{i < j}H_i : \bigcap_{i \leqslant j}H_i |\geqslant m$ for all $j \leqslant m$.
\end{enumerate} 
(If $\mathcal H$ is the family of centralisers of elements, this is called the 
\emph{$\widetilde{\mathfrak{M}}_c$-condition}.)\end{lemma}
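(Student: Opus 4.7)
The plan is to argue by contradiction, upgrading the arbitrarily large finite index drops produced by a failure of the chain condition to genuinely infinite ones in a saturated elementary extension, and then deriving a contradiction via the Lascar equality and fineness of the dimension.

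Write the family as $\mathcal{H} = \{\varphi(G, a) : a \in Y\}$ for a fixed formula $\varphi(x, y)$, set $n = \dim(G)$, and suppose for contradiction that the conclusion fails. Then for every $m \geq n$ there exist parameters $a_0, \ldots, a_m$ so that, writing $A_j = \bigcap_{i \leq j} \varphi(G, a_i)$ and $A_{-1} = G$, one has $[A_{j-1} : A_j] \geq m$ for all $0 \leq j \leq m$. Truncating after the $n$-th term, the first-order sentence $\psi_m$ asserting the existence of $y_0, \ldots, y_n$ with $[B_{j-1} : B_j] \geq m$ for $0 \leq j \leq n$ (where $B_j = \bigcap_{i \leq j} \varphi(G, y_i)$ and $B_{-1} = G$) holds in $G$ for every such $m$.

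Since $\psi_{m'}$ for $m' \geq m$ implies $\psi_m$, the partial type $\{\psi_m : m \in \omega\}$ in the variables $y_0, \ldots, y_n$ is finitely satisfiable in $\mathrm{Th}(G)$. By compactness and saturation, a sufficiently saturated elementary extension $G^* \succeq G$ contains a single tuple $a_0^*, \ldots, a_n^*$ realising every $\psi_m$ simultaneously, yielding a chain $G^* = A_{-1}^* \supseteq A_0^* \supseteq \cdots \supseteq A_n^*$ in which every consecutive index is infinite. The dimension function, being invariant on formulas modulo types of parameters, transfers to $G^*$, and $\dim(G^*) = n$.

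Now applying Fact~\ref{fact:Lascar-eq} at each step gives $\dim(A_{j-1}^*) = \dim(A_j^*) + \dim(A_{j-1}^*/A_j^*)$; since each coset space is infinite, fineness forces $\dim(A_{j-1}^*/A_j^*) \geq 1$, so $\dim(A_j^*) < \dim(A_{j-1}^*)$. Iterating over the $n+1$ strict drops gives $\dim(A_n^*) \leq n - (n+1) = -1$, contradicting that $A_n^*$ is a non-empty group. The main obstacle is precisely the compactness step: the hypothesis supplies only unboundedly large but a priori finite index drops, and a direct count inside $G$ is insufficient because finite drops are invisible to the dimension. Passing to a saturated extension packages the separately witnessed chains into a single tuple and upgrades ``unbounded finite'' to ``infinite'', after which additivity and fineness finish the argument.
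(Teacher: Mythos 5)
Your proof is correct and follows essentially the same route as the paper's: both use compactness to upgrade unboundedly large finite index drops to infinite ones, then iterate the Lascar equality with fineness to contradict $\dim(G) < \infty$. You are somewhat more explicit about passing to a saturated elementary extension (and about invariance of $\dim$ under elementary equivalence), which the paper's proof glosses over; this is a useful clarification rather than a different argument.
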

\begin{proof}
Assume to the contrary that $G$ does not satisfy the icc$^0$. Then, by compactness, the condition $$|\bigcap_{i < m}H_i : \bigcap_{i \leqslant m}H_i |\geqslant m\text{ for all }m< \omega$$ is a consistent first-order condition on the parameters needed to define the groups $(H_i : i < \omega)$. So there is a family $\{H_i : i \le {\rm dim}(G)\} \subset \mathcal{H}$ so that $$G>H_0 > H_1\cap H_0 > \ldots > \bigcap_{i \le{\rm dim}(G)} H_i$$ is a descending chain of length ${\rm dim}(G)+2$ of definable subgroups of $G$, each having infinite index in its predecessor, contradicting Fact~\ref{fact:Lascar-eq}.\end{proof}

\begin{corollary} Let $G$ be an icc$^0$group. If $H \leqslant G$ is definable, then $\widetilde{N}_G(H)$ is also definable.\end{corollary}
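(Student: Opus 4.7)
The plan is to show that commensurability is uniform in $\widetilde N_G(H)$: there exists $n<\omega$ such that $|H : H \cap H^g|\leqslant n$ for every $g \in \widetilde N_G(H)$. Once this is established, since $\widetilde N_G(H)$ is a subgroup of $G$, every $g \in \widetilde N_G(H)$ also has $g^{-1}\in \widetilde N_G(H)$, hence $|H : H\cap H^{g^{-1}}|\leqslant n$, equivalently $|H^g : H\cap H^g|\leqslant n$. Therefore
\[
\widetilde N_G(H) \;=\; \{\, g \in G : |H : H\cap H^g|\leqslant n \text{ and } |H^g : H\cap H^g|\leqslant n\,\},
\]
which is first-order definable (the condition ``$|K : L| \leqslant n$'' being witnessed by $n$ coset representatives).

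To prove uniformity, I would apply the icc$^0$ condition to the uniformly definable family $\mathcal H = \{H^g : g \in G\}$, obtaining a bound $m<\omega$ such that no chain of intersections from $\mathcal H$ of length $m$ has every successive index at least $m$. Suppose for contradiction that uniformity fails: for every $N$ there is $g \in \widetilde N_G(H)$ with $|H:H\cap H^g|$ finite but at least $N$.

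I would then build inductively a sequence $g_1, g_2, \ldots \in \widetilde N_G(H)$ such that, setting $H_0 := H$ and $H_{j+1} := H_j \cap H^{g_{j+1}}$, each $H_j$ retains finite index $N_j$ in $H$, while $|H_j : H_{j+1}| \geqslant m$. The engine is the tower inequality
\[
|H_j : H_j \cap H^g| \;\geqslant\; \frac{|H : H \cap H^g|}{|H : H_j|},
\]
valid for any $g \in G$, which follows from $|H : H_j| \cdot |H_j : H_j \cap H^g| = |H : H_j \cap H^g| \geqslant |H : H \cap H^g|$. Thus, given $N_j<\infty$, the failure of uniformity lets me pick $g_{j+1} \in \widetilde N_G(H)$ with $|H : H \cap H^{g_{j+1}}| \geqslant m \cdot N_j$, yielding $|H_j : H_{j+1}| \geqslant m$ and $N_{j+1}<\infty$. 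Iterating for more than $m$ steps contradicts the icc$^0$ bound for $\mathcal H$ (since each $H_j$ is an intersection of members of $\mathcal H$), and this contradiction proves uniformity.

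The only delicate point is the index bookkeeping: verifying that $N_j$ remains finite throughout the induction, so that the construction can be continued indefinitely. This is automatic, since each $N_j$ is a product of finitely many finite indices. An alternative route via Schlichting's theorem applied to $\{H^g : g \in \widetilde N_G(H)\}$ is available but requires heavier machinery than this direct chain-condition argument.
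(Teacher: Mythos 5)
Your proof is correct and takes essentially the same approach as the paper: both establish uniformity of commensurability by using the icc$^0$ bound on a chain of intersections of conjugates of $H$, then invoke $|H^g:H^g\cap H|=|H:H\cap H^{g^{-1}}|$ to conclude definability. The paper selects a maximal such chain and shows that a conjugate of too-large finite index would extend it, whereas you assume uniformity fails and inductively construct an overlong chain; this is the same argument presented in contrapositive form.
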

\begin{proof} We need to show that the commensurability is uniform for conjugates of $H$. So suppose not. Let $m$ be given by the the icc$^0$-condition, and choose a maximal
chain $$H > H\cap H^{g_0} > \ldots > H \cap \bigcap_{i=0}^k H^{g_i}$$ with every group of finite index at least $m$ in its predecessor. Then $k < m$. Now, if there were $g\in G$ so that $\infty > |H:H\cap H^g| > m\,|H: H \cap \bigcap_{i=0}^{k} H^{g_i}|$, then $$|H \cap \bigcap_{i=0}^{k} H^{g_i} :H \cap \bigcap_{i=0}^{k} H^{g_i}\cap H^g| > m,$$
contradicting the maximality of $k$. Since $|H^g:H^g\cap H|=|H:H\cap H^{g^{-1}}|$, this finishes the proof.\end{proof}

\begin{fact}[{\cite[Lemma 4.4]{Wagner2020}}] Let $G$ be an $\widetilde{\mathfrak M}_c$-group. If $H \leqslant G$ is definable, then $ \widetilde{C}_G(H)$ is also definable.\end{fact}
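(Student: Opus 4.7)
The plan is to first establish uniformity of the commensurability $H \lesssim C_H(g) := H \cap C_G(g)$ on $\widetilde{C}_G(H)$: that there is $N \in \mathbb{N}$ with $|H : C_H(g)| \leqslant N$ for every $g \in \widetilde{C}_G(H)$. Granted this, $\widetilde{C}_G(H)$ is the definable set $\{g \in G : |H : C_H(g)| \leqslant N\}$, cut out by the first-order statement that $H$ be covered by $N$ cosets of $C_G(g)\cap H$; and it is visibly a subgroup since $C_H(g^{-1}) = C_H(g)$ and $C_H(g) \cap C_H(g') \leqslant C_H(gg')$ has finite index in $H$ whenever both factors do.

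For uniformity I would mirror the proof of the preceding corollary for $\widetilde{N}_G(H)$, with the conjugates $H^g$ replaced by the centralisers $C_H(g)$. The one new ingredient is that the $\widetilde{\mathfrak{M}}_c$-condition controls chains of intersections of $G$-centralisers, not of their traces in $H$. But for subgroups $A \leqslant B$ of $G$ one has $|B \cap H : A \cap H| \leqslant |B : A|$, and iterating yields
\[
\Bigl|H \cap \bigcap_{i<j} C_G(g_i) : H \cap \bigcap_{i\leqslant j} C_G(g_i)\Bigr| \leqslant \Bigl|\bigcap_{i<j} C_G(g_i) : \bigcap_{i\leqslant j} C_G(g_i)\Bigr|.
\]
So any chain $H > C_H(g_0) > C_H(g_0)\cap C_H(g_1) > \cdots$ with each step of finite index at least $m$ forces an equally long chain of $G$-centraliser intersections with the same lower bound on indices, and thus has length bounded by the icc$^0$-constant $m$ for centralisers.

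Picking such a chain of maximal length $k < m$, set $n := |H : \bigcap_{i=0}^k C_H(g_i)|$, which is finite. For any $g \in \widetilde{C}_G(H)$ Lagrange gives
\[
n \cdot \Bigl|\bigcap_{i=0}^k C_H(g_i) : \bigcap_{i=0}^k C_H(g_i) \cap C_H(g)\Bigr| = \Bigl|H : \bigcap_{i=0}^k C_H(g_i) \cap C_H(g)\Bigr| \geqslant |H : C_H(g)|;
\]
so if $|H : C_H(g)| > mn$ the middle factor would exceed $m$, letting us extend the chain by $g$ and contradicting maximality. Hence $N := mn$ works and the fact follows. The crux is thus the passage from the icc$^0$-bound for $G$-centralisers to the induced bound on chains inside $H$; once that is in place, the remaining index-counting argument is identical in shape to the one for $\widetilde{N}_G(H)$ given just above.
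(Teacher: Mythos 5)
The paper cites this as a Fact from \cite[Lemma 4.4]{Wagner2020} without reproducing the proof, so there is no in-paper argument to compare against; but your proposal correctly mirrors the paper's proof of the preceding corollary on $\widetilde N_G(H)$, and the one new ingredient you identify --- passing from the $\widetilde{\mathfrak M}_c$ bound on chains of $G$-centraliser intersections to the induced bound on their $H$-traces via $|B\cap H : A\cap H| \leqslant |B:A|$ --- is exactly what is needed and is handled correctly. The argument is sound as written.
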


Recall that if in a group $G$ there is a finite bound on the size of its conjugacy classes, then the derived subgroup $G'$ is finite. In particular, in any $\widetilde {\mathfrak M}_c$-group the almost centre $\widetilde Z(G)$ is finite-by-abelian.

\begin{fact}[Hempel {\cite[Theorem 4.7.]{Hempel2020}}]\label{th:Fitting}Let $G$ be an $\widetilde{\mathfrak{M}}_c$-group. Then the Fitting subgroup ${\rm Fitt}(G)$ exists and is therefore definable.\end{fact}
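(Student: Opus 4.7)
The plan is to exhibit a largest nilpotent normal subgroup of $G$; its definability then follows automatically from \cite[Theorem 1.1]{Houcine2013}, as noted in Section~\ref{sec:bground}. Write $F$ for the subgroup generated by all nilpotent normal subgroups of $G$. By Fitting's classical lemma, the product of two normal nilpotent subgroups is itself normal nilpotent, so every finite product of members of this family is again normal nilpotent. Since the assertion ``$[x_1,\ldots,x_{c+1}]=1$'' is local in finitely many elements, a directed union of nilpotent normal subgroups of class at most $c$ is still nilpotent of class at most $c$. Consequently it is enough to produce a uniform bound $c_0$ on the nilpotency class of every nilpotent normal subgroup of $G$: every finite product of such is itself a normal nilpotent subgroup, hence has class at most $c_0$, and passing to the directed union yields $F$ nilpotent of class at most $c_0$, which must then be the Fitting subgroup.

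The heart of the matter is this uniform class bound, and it is where the $\widetilde{\mathfrak{M}}_c$-condition enters. Given $N\trianglelefteq G$ nilpotent of class $c$, read the upper central series $1=Z_0(N)<Z_1(N)<\cdots<Z_c(N)=N$ as furnishing, at each level $i$, an element $x_i\in Z_{i+1}(N)\setminus Z_i(N)$ together with witnesses $y_1^{(i)},\ldots,y_i^{(i)}\in N$ whose iterated commutators with $x_i$ are non-trivial but become central one step later. Because $N\trianglelefteq G$, all such commutators and witnesses lie in $G$ and their $G$-conjugates remain inside $N$. Assembling these data into a descending chain of almost centralisers $\widetilde{C}_G(\bar z_j)$ in $G$, one finds each step of infinite index in its predecessor, and the $\widetilde{\mathfrak{M}}_c$-condition then caps the length of this chain --- and hence $c$ --- by some $c_0$ depending only on $G$.

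The main obstacle is precisely this transfer: the $\widetilde{\mathfrak{M}}_c$-condition bounds chains of \emph{almost} centralisers in $G$, whereas the nilpotency class of $N$ is measured by chains of \emph{exact} centralisers inside $N$. Bridging the two requires that the finite indices hidden in each almost centraliser be themselves uniformly controlled, so that they cannot accumulate over iterated levels to mask a genuine strict inclusion in the central series of $N$. This bookkeeping is carried out systematically in Hempel's almost commutator calculus in \cite[Section 4]{Hempel2020}, which translates commutators inside a normal subgroup into almost commutators in the ambient group in a manner compatible with chaining. Once $c_0$ is in hand, the argument in the first paragraph produces the Fitting subgroup, and \cite[Theorem 1.1]{Houcine2013} delivers definability.
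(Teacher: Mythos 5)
The paper does not prove this statement --- it records it as a Fact imported from Hempel \cite{Hempel2020} --- so there is no internal proof to compare against. Your reduction in the first paragraph is correct and standard: once one has a uniform bound $c_0$ on the nilpotency class of the normal nilpotent subgroups, Fitting's lemma keeps finite products inside the same family, nilpotency of class at most $c_0$ passes to directed unions, and the subgroup they generate is then the Fitting subgroup; definability is \cite{Houcine2013}, as you note.

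The problem is that your sketch of the class bound is not an argument. You assert that a class-$c$ normal nilpotent $N$ produces a chain of almost centralisers $\widetilde C_G(\bar z_j)$ in $G$ ``with each step of infinite index in its predecessor'', but that is exactly what has to be proved: nothing forces the successive quotients $Z_{i+1}(N)/Z_i(N)$ of the upper central series to be infinite, and whenever some of them are finite the associated (almost) centralisers differ only by finite index, so the icc$^0$/$\widetilde{\mathfrak{M}}_c$ chain condition yields no bound on $c$ from this chain. Your third paragraph identifies precisely this obstacle and then hands it off wholesale to Hempel's almost-commutator machinery in Section 4 of \cite{Hempel2020}. Thus what remains after the (correct) first paragraph is a description of the shape a proof should take plus a citation --- which is in substance what the paper itself does by stating the result as a Fact --- while the one substantive claim you make en route (the infinite-index steps) is, as stated, unjustified and in general false; establishing that the $\widetilde{\mathfrak{M}}_c$-condition nonetheless caps the class is the actual content of Hempel's theorem and is used here as a black box.
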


For an arbitrary group $H$ denote by $R(H)$ the group generated by all soluble normal subgroups of $H$; note that if $R(H)$ is soluble then $R(G)={\rm Rad}(H)$. To prove our results we need the following fact.

\begin{theorem}\label{fact:solv-rad}Let $G$ be a finite-dimensional group with additive and fine dimension. Then the soluble radical ${\rm Rad}(G)$ exists and is therefore definable. 
\end{theorem}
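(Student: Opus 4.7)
The plan is to prove the theorem by induction on $\dim G$, using Hempel's Fitting subgroup theorem (Fact~\ref{th:Fitting}) together with the almost centre $\widetilde Z(G)$, which in any $\widetilde{\mathfrak M}_c$-group is definable, finite-by-abelian (hence soluble), and contains every finite normal subgroup of $G$ (since finite normal subgroups have finite conjugacy classes). Note that by the icc$^0$-lemma, our $G$ is automatically $\widetilde{\mathfrak M}_c$, so these tools apply.

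If $\dim G = 0$ then $G$ is finite by fineness and the claim is classical. For the inductive step, I split on whether some definable soluble normal subgroup has positive dimension. \textbf{Case 1:} there is a definable soluble normal $N \leq G$ with $\dim N > 0$. Then by Lascar equality (Fact~\ref{fact:Lascar-eq}), $\dim(G/N) < \dim G$, and $G/N$ inherits the additive, fine dimension. The induction hypothesis yields $\mathrm{Rad}(G/N)$, definable by Houcine's theorem; its preimage $R$ in $G$ is definable, normal, and soluble (a soluble-by-soluble extension). Any soluble normal $N' \leq G$ projects to a soluble normal subgroup of $G/N$, hence lies in $\mathrm{Rad}(G/N) = R/N$, so $N' \leq R$ and $R = \mathrm{Rad}(G)$.

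\textbf{Case 2:} every definable soluble normal subgroup of $G$ is finite; in particular $\widetilde Z(G)$ is finite. I iterate Fitting: set $F_0 = 1$ and let $F_{i+1}$ be the preimage in $G$ of $\mathrm{Fitt}(G/F_i)$, which is definable by Fact~\ref{th:Fitting} applied to the finite-dimensional quotient $G/F_i$. A short induction shows each $F_i$ is a definable soluble normal subgroup of $G$, hence finite under the Case~2 hypothesis; as finite normal subgroups embed into $\widetilde Z(G)$ and $\widetilde Z(G)$ is finite, the ascending chain $F_0 \leq F_1 \leq \cdots$ is bounded and stabilises at some $F_k$ with $\mathrm{Fitt}(G/F_k) = 1$. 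Any non-trivial soluble normal subgroup of $G/F_k$ would contribute a non-trivial abelian characteristic (hence nilpotent normal) subgroup, contradicting the triviality of $\mathrm{Fitt}(G/F_k)$; so $F_k$ contains every soluble normal subgroup of $G$, and $F_k = \mathrm{Rad}(G)$.

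The main obstacle is the termination of Case~2: one needs both that the iteration stays within the class of finite groups (guaranteed by the case hypothesis, which forces every new $F_{i+1}$, being a definable soluble normal subgroup, to be finite) and that $\widetilde Z(G)$ itself is finite (bounding the chain). Once existence is secured in both cases, the definability of $\mathrm{Rad}(G)$ follows from the cited theorem of Houcine~\cite{Houcine2013}.
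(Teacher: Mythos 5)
Your proposal is correct, and it follows the paper's overall skeleton (induction on dimension, existence of $\mathrm{Fitt}(G)$ via Fact~\ref{th:Fitting}, reduction to a lower-dimensional quotient when a suitable infinite soluble definable normal subgroup exists, definability from Houcine). The genuine divergence is in the hard case where no definable soluble normal subgroup is infinite. You iterate the Fitting subgroup — building the series $F_0 = 1 \leq F_1 \leq F_2 \leq \cdots$ with $F_{i+1}/F_i = \mathrm{Fitt}(G/F_i)$ — and argue that since each $F_i$ is a finite normal subgroup contained in the finite group $\widetilde Z(G)$, the chain stabilises at some $F_k$ with $\mathrm{Fitt}(G/F_k) = 1$; a non-trivial soluble normal subgroup of $G/F_k$ would produce a non-trivial abelian (hence nilpotent) normal subgroup, a contradiction, so $F_k = \mathrm{Rad}(G)$. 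The paper instead avoids iteration entirely: with $\mathrm{Fitt}(G)$ finite it passes to the finite-index normal subgroup $G_0 = C_G(\mathrm{Fitt}(G))$, shows that $G_0/Z(\mathrm{Fitt}(G))$ has no non-trivial abelian normal subgroup (so $\mathrm{Rad}(G_0) = Z(\mathrm{Fitt}(G))$ is finite), and then deduces that $R(G)$ is finite because it meets $G_0$ in a finite group and has finite index over that intersection. Both arguments are sound; yours trades the paper's single centraliser computation for a Fitting-series argument, at the cost of needing explicitly that $\widetilde Z(G)$ is definable, soluble, and absorbs all finite normal subgroups (all of which are available in the paper's background), while the paper's route keeps the bookkeeping local to one quotient $G_0/Z$.
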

\begin{proof}
We prove the claim by induction on the dimension. If ${\rm dim}(G)=0$ then $G$ is finite and $R(G)={\rm Rad}(G)$. Assume that the claim holds when ${\rm dim}(G)=n-1$ and let ${\rm dim}(G)=n$. Now, by Fact~\ref{th:Fitting}, the Fitting subgroup ${\rm Fitt}(G)$ exists. Note that $R({\rm Fitt}(G))={\rm Rad}({\rm Fitt}(G))={\rm Fitt}(G)$. If ${\rm Fitt}(G)$ is infinite then ${\rm dim}(G/{\rm Fitt}(G))< n$ and $R(G/{\rm Fitt}(G))={\rm Rad}(G/{\rm Fitt}(G))$ by the inductive assumption. Since ${\rm Fitt}(G)$ is a normal and soluble subgroup of $G$, we get that $R(G)={\rm Rad}(G)$.

Now, we may assume that ${\rm Fitt}(G)$ is finite.  Set $G_0:=C_G({\rm Fitt}(G))$ and $Z:=Z({\rm Fitt}(G))$. Then $G_0$ is a normal finite index subgroup of $G$. If $AZ/Z \leqslant G_0/Z$ is an abelian normal subgroup then $AZ\leqslant {\rm Fitt}(G_0)\le {\rm Fitt}(G)\cap G_0=Z$. So $G_0/Z$ has no non-trivial abelian normal subgroups and hence $R(G_0)={\rm Rad}(G_0)=Z$. Now $R(G)\cap G_0=R(G_0)$ and since $|R(G): R(G)\cap G_0| \leqslant |G:G_0|$ is finite, $R(G)$ is finite, whence soluble. \end{proof}

\subsection{Pseudofinite groups}\label{Sec:pf} We denote by $\mathcal{L}_{gr}$ the language of groups.

\begin{definition}A \emph{pseudofinite} group is an infinite group which satisfies every first-order sentence of $\mathcal{L}_{gr}$ that is true of all finite groups. \end{definition}

Note that by \L o\'s' Theorem, an infinite group (resp.\ $\mathcal{L}$-structure) is pseudofinite if and only if it is elementarily equivalent to a non-principal ultraproduct of finite groups ($\mathcal{L}$-structures) of increasing orders, see \cite{Macpherson2018}.

Typical examples of pseudofinite groups are torsion-free divisible abelian groups, infinite extraspecial groups of exponent $p >2$ and rank $n$ and (twisted) Chevalley groups over pseudofinite fields (recall that \emph{using} CFSG, Wilson \cite{Wilson1995} proved that a simple pseudofinite group is elementarily equivalent to a (twisted) Chevalley group $X(F)$ over a pseudofinite field $F$. Further, as explained in \cite{Macpherson2018}, results by Point \cite{Point1999} allow one to conclude that a simple group is pseudofinite if and only if it is elementarily equivalent to $X(F)$; Ryten \cite[Chapter 5]{Ryten2007} has generalised this by showing that `elementarily equivalent' can be strengthened to `isomorphic'. Note that replacing $\equiv$ by $\cong$ is done without further use of CFSG). 

We will use the following theorem repeatedly, often without referring to it.

\begin{theorem}[Wagner {\cite[Corollary 4.14]{Wagner2020}}]\label{th:infinite-abelian}Let $G$ be a finite-dimensional pseudo\-finite group with additive and fine dimension. If ${\rm dim}(G)=1$, then $G$ is finite-by-abelian-by-finite.
\end{theorem}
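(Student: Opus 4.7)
The plan is to reduce the problem to showing that the almost centre $\widetilde Z(G)=\widetilde C_G(G)$ has finite index in $G$. Since $\widetilde Z(G)$ is finite-by-abelian in any $\widetilde{\mathfrak M}_c$-group (as noted just before Fact \ref{th:Fitting}), this immediately exhibits $G$ as (finite-by-abelian)-by-finite, which by the discussion at the start of Section \ref{sec:bground} equals finite-by-abelian-by-finite.

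By the Lascar equality (Fact \ref{fact:Lascar-eq}) together with fineness, every definable subgroup of $G$ is either finite (dimension $0$) or of finite index (dimension $1$); in particular each centraliser $C_G(g)$ is of one of these two types, the finite-index case being equivalent to $g\in\widetilde Z(G)$. I would then split on the Fitting subgroup $F:={\rm Fitt}(G)$, which is definable by Fact \ref{th:Fitting}.

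If $F$ has finite index in $G$, the upper central series $1=Z_0(F)<Z_1(F)<\cdots<Z_c(F)=F$ is a strictly ascending chain of definable characteristic subgroups of $F$ (hence normal in $G$) with abelian successive quotients. By Lascar, each $\dim Z_i(F)\in\{0,1\}$, so there is a unique index $j$ at which $\dim Z_j(F)=0$ and $\dim Z_{j+1}(F)=1$. Thus $Z_j(F)$ is finite and normal in $G$, $Z_{j+1}(F)$ has finite index in $G$ with abelian quotient $Z_{j+1}(F)/Z_j(F)$, and $G/Z_{j+1}(F)$ is finite (dimension $0$), directly exhibiting $G$ as finite-by-abelian-by-finite.

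The remaining case is $F$ finite. Following the second half of the proof of Theorem \ref{fact:solv-rad}, with $G_0:=C_G(F)$ of finite index in $G$ and $Z:=Z(F)$, the quotient $H:=G_0/Z$ is a pseudofinite group of dimension $1$ admitting no non-trivial abelian normal subgroup. Ruling out such $H$ without CFSG is the principal obstacle of the proof. My approach would combine Feit--Thompson (Theorem \ref{th:Feit-Thompson}) with pseudofiniteness: if $H$ had no involutions, almost every finite approximation would be soluble, and one would try to promote this, via the dimension-$1$ constraint and triviality of the soluble radical, to $H=1$, contradicting $H$ infinite; if $H$ contains an involution $\tau$, then $C_H(\tau)$ is either finite or of finite index, the finite-index branch forcing $\tau\in\widetilde Z(H)$ and thus yielding (through a characteristic-series chase inside the finite-by-abelian group $\widetilde Z(H)$) a non-trivial abelian normal subgroup of $H$, while the finite-centraliser branch would be excluded by applying Schlichting's Theorem (Theorem \ref{th:commensurable}) to the uniformly commensurable family of conjugate centralisers and running a conjugacy-class counting argument on the approximations, again invoking Feit--Thompson to keep the analysis CFSG-free.
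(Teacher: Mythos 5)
The paper gives no internal proof of this statement --- it is quoted wholesale from Wagner \cite[Corollary~4.14]{Wagner2020} --- so I can only evaluate your attempt on its own terms. Your opening reduction is sound: by Fact~\ref{fact:Lascar-eq} and fineness every definable subgroup is finite or of finite index, and if $\widetilde Z(G)$ has finite index the conclusion is immediate. Your Case~A (${\rm Fitt}(G)$ of finite index, upper central series jump) is also correct. But Case~B, which you yourself flag as ``the principal obstacle'', is a sketch with real gaps. (i)~In the no-involutions branch, Feit--Thompson only tells you almost all finite approximations are soluble; without a uniform bound on derived length this yields no first-order consequence in $H$, so ``promote this to $H=1$'' is exactly the missing step. (ii)~In the finite-centraliser branch, the conjugates of a finite $C_H(\tau)$ are trivially uniformly commensurable, and Schlichting just returns a finite normal subgroup that may well be trivial; nothing concrete follows, and the ``conjugacy-class counting'' is not a proof. (iii)~Even the finite-index branch is not closed: $\widetilde Z(H)'$ is a finite normal subgroup of $H$ which could be perfect (a product of nonabelian simple groups), so your ``characteristic-series chase'' inside $\widetilde Z(H)$ need not terminate at a nontrivial \emph{abelian} normal subgroup.

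The missing ingredients are already in the paper. Rather than routing through ${\rm Fitt}(G)$, work with $\widetilde Z(G)$ itself: if $\widetilde Z(G)$ is finite, pass to $\overline G=G/\widetilde Z(G)$ and observe $\widetilde Z(\overline G)=1$ (for $g$ with $C_{\overline G}(\overline g)$ of finite index, the map $h\mapsto[g,h]$ on the preimage $K$ of $C_{\overline G}(\overline g)$ has fibres the $C_G(g)$-cosets in $K$ and finite image in $\widetilde Z(G)$, forcing $g\in\widetilde Z(G)$). Then $\overline G$ is semisimple: a nontrivial abelian normal subgroup is either finite, hence inside $\widetilde Z(\overline G)=1$, or of finite index, which pulls back to make $G$ (finite-by-abelian)-by-finite. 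Now the socle-plus-Feit--Thompson argument from Lemma~\ref{lemma:involutions} produces an involution $i\in\overline G$, and Fact~\ref{hempel-palacin} (Hempel--Palac\'in) forces $C_{\overline G}(i)$ to be infinite, hence of finite index in dimension~$1$, hence $i\in\widetilde Z(\overline G)=1$, a contradiction. Not invoking Fact~\ref{hempel-palacin} is precisely what leaves your sketch open.
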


\begin{remark}\label{remark:FItting}Let $G$ be a pseudofinite finite-dimensional group with additive and fine dimension. If ${\rm dim}(G)=1$, then, by the above, $\widetilde{Z}(G)$ is of finite index in $G$ and the commutator group $\widetilde{Z}(G)'$ is finite. So $G$ contains a definable infinite (finite and characteristic)-by-abelian characteristic subgroup.\end{remark}

One of the earliest (1955) results on centralisers of involutions in finite groups states that, up to isomorphism, there are only a finite number of finite simple groups with a given centraliser of an involution. This was shown by Brauer and Fowler \cite{Brauer-Fowler}. The following result (which, like the result by Brauer and Fowler, is independent of CFSG) provides an alternative proof for this fact (see \cite[Corollary 2.5]{Hempel-Palacin2020}):

\begin{fact}[Hempel-Palac\'{i}n {\cite[Lemma 2.3]{Hempel-Palacin2020}}]\label{hempel-palacin}Let $G$ be a pseudofinite group and assume that $\widetilde Z(G)=1$. Then the centraliser $C_G(i)$ is infinite for any $i\in I(G)$.
\end{fact}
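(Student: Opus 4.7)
The plan is to proceed by contradiction: assume $C_G(i)$ is finite of cardinality $k$, and exhibit a non-trivial element of $\widetilde Z(G)$. Since $G$ is pseudofinite, write $G = \prod_{\mathcal U} G_n$ as a non-principal ultraproduct of finite groups, with $i$ represented by involutions $i_n \in G_n$ satisfying $|C_{G_n}(i_n)| = k$ for $\mathcal U$-almost all $n$ (this is a first-order condition).

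The key object is the \emph{inverted set}
\[
T_n := \{x \in G_n : i_n x i_n^{-1} = x^{-1}\}.
\]
The commutator map $g \mapsto [g, i_n] = g^{-1} i_n^{-1} g i_n$ takes values in $T_n$ and is $k$-to-one (two elements have the same commutator iff they lie in the same coset of $C_{G_n}(i_n)$), so $|T_n| \ge |G_n|/k$. For any $x \in T_n$ of even order $2m$ one has $i_n x^m i_n^{-1} = (x^{-1})^m = x^m$, so the involution $x^m$ belongs to $C_{G_n}(i_n)$, which contains at most $k$ involutions. Grouping the even-order elements of $T_n$ by their involution power $x^m$, a pigeonhole argument yields an involution $\epsilon_n \in C_{G_n}(i_n)$ whose preimage under $x \mapsto x^m$ has size at least $|T_n|_{\mathrm{even}}/k$; and every such preimage element centralises $\epsilon_n$, since $\epsilon_n = x^m$ is a power of $x$. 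If a positive proportion (say, at least a half) of $T_n$ consists of even-order elements, this produces $[G_n : C_{G_n}(\epsilon_n)] \le 2k^2$. Passing to the ultralimit, $\epsilon := [\epsilon_n]$ is an involution with $[G : C_G(\epsilon)] \le 2k^2 < \infty$ by \L o\'s, so $\epsilon \in \widetilde Z(G) \setminus \{1\}$---the desired contradiction.

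The principal obstacle is the complementary case, in which odd-order elements dominate $T_n$ (as happens for the ultraproduct $\prod D_{2q_n}$ with $q_n$ odd primes). Here the dihedral geometry must be exploited directly: each odd-order $x \in T_n$ generates a cyclic subgroup $\langle x\rangle$ which is $i_n$-invariant (with $i_n$ acting by inversion), and the bound $|C_{G_n}(i_n)| = k$ forces the family of such cyclic subgroups to be uniformly commensurable (two non-commensurable such subgroups would generate, via their intersection behaviour, too many commuting involutions in $C_{G_n}(i_n)$). Applying Schlichting's Theorem (Theorem~\ref{th:commensurable}) to this uniformly commensurable family yields a definable $i$-invariant abelian subgroup $A \le G$ whose non-trivial elements have centralisers of bounded (finite) index in $G$; once more this contradicts $\widetilde Z(G) = 1$. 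This odd-order scenario is, in my view, the delicate technical heart of the argument.
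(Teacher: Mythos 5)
Your even-order branch is essentially sound: the commutator map $g \mapsto [g,i_n]$ lands in $T_n$ with fibres exactly the cosets of $C_{G_n}(i_n)$, so $|T_n| \geq |G_n|/k$, and the pigeonhole on involution powers $x^{o(x)/2} \in C_{G_n}(i_n)$ does produce an involution $\epsilon_n$ with $[G_n : C_{G_n}(\epsilon_n)] \leq 2k^2$ whenever a definite proportion of $T_n$ has even order. Passing to the ultralimit then gives a non-trivial element of $\widetilde Z(G)$, as you say.

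The odd-order branch, however, has a genuine gap, and it is not merely a technicality to be filled in. The claim that the cyclic subgroups $\langle x\rangle$ for odd-order $x \in T_n$ form a uniformly commensurable family is simply false. Consider $G_n = (\mathbb{Z}/3)^n \rtimes \langle i \rangle$ with $i$ acting by inversion. Here $|C_{G_n}(i)| = 2 = k$, and $T_n$ contains all of $(\mathbb{Z}/3)^n$; the cyclic subgroups $\langle x\rangle$ for $1 \neq x \in (\mathbb{Z}/3)^n$ are $(3^n-1)/2$ subgroups of order $3$ that intersect pairwise trivially. They are as far from uniformly commensurable as possible, yet $C_{G_n}(i)$ contains no extra commuting involutions at all --- there is nothing for your parenthetical heuristic to produce. (The Fact is nonetheless true for this family, but only because every non-trivial element of the elementary abelian normal subgroup already has centraliser of index $2$; Schlichting plays no role.) So the Schlichting route cannot be salvaged as written.

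What actually works, and handles both parities at once, is the classical Brauer--Fowler count on the conjugacy class $T = i^{G_n}$: one has $|T| = |G_n|/k$, and for each $1 \neq g \in G_n$ the number of pairs $(a,b) \in T \times T$ with $ab = g$ is at most $|C_{G_n}(g)|$ (the set of elements inverting $g$ is a single coset of $C_{G_n}(g)$, and $a$ determines $b$). Comparing $|T|^2$ with $|T| + \sum_{g \neq 1} |C_{G_n}(g)|$ shows that unless $|G_n|$ is bounded there must exist $g_n \neq 1$ with $[G_n : C_{G_n}(g_n)] \leq 4k^2$ (say); taking $g = [g_n]$ gives the contradiction with $\widetilde Z(G) = 1$. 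This is a single uniform argument with no case distinction, and it is, to my understanding, the argument underlying the cited result of Hempel and Palac\'in, which the paper itself flags as a Brauer--Fowler type statement. I would recommend replacing your bifurcated argument with this count: it is shorter, it needs no dichotomy on the distribution of element orders across the ultrafilter, and it is the step your odd case fails to supply.
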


\section{Proofs of our results}\label{sec:proofs} Before starting our proofs, we still need some definitions and observations.

A group $G$ is called \emph{definably simple} if it has no proper non-trivial definable normal subgroup. It is \emph{semisimple} if it has no non-trivial abelian normal subgroup. Note that if $A$ is a non-trivial normal abelian subgroup and $1\not=a\in A$, then $Z(C_G(a^G))$ is a definable non-trivial normal abelian subgroup, so semisimplicity is the same as definable semisimplicity. Clearly, a semisimple group has trivial soluble radical. Moreover, if $G$ is semisimple and $N$ is a normal subgroup of finite index, then $N$ is also semisimple: If $A$ were a normal abelian subgroup of $N$, the finitely many $G$-conjugates of $A$ would generate a nilpotent subgroup normal in $G$, whose centre would would be a non-trivial abelian normal subgroup of $G$.

The \emph{socle} ${\rm Soc}(G)$ of a finite group $G$ is the subgroup generated by all minimal normal non-trivial subgroups.

A definable subgrop $H$ of $G$ is \emph{strongly embedded} if $H$ has involutions, but $H\cap H^g$ does not for any $g\in G\setminus H$. By \L o\'s' Theorem, if $G=\prod_{i\in I}G_i/ \mathcal{U}$ is pseudofinite, then $H$ is strongly embedded if and only if $H_i$ is strongly embedded in $G_i$ for almost all $i$.

\subsection{Useful lemmas}
We shall call an infinite group \emph{almost simple} if it is not abelian-by-finite and has no definable normal subgroup of infinite index.

\begin{lemma}\label{lemma:almost} An almost simple icc$^0$-group $G$ is semisimple. Moreover, $\widetilde Z(G)$ is trivial, and if $H$ is an infinite definable subgroup of infinite index, then $\widetilde N_G(H)<G$. If $H$ is a definable infinite soluble-by-finite subgroup, then $\widetilde N_G(H)$ has infinite index in~$G$.\end{lemma}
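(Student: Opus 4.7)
The plan is to establish the four assertions consecutively, in each case turning almost simplicity---the absence of proper definable normal subgroups of infinite index together with $G$ not abelian-by-finite---into a contradiction. For semisimplicity, I take a non-trivial abelian normal subgroup $A$ with $1 \ne a \in A$; as recalled at the start of Section~\ref{sec:proofs}, $Z(C_G(a^G))$ is then a non-trivial abelian definable normal subgroup of $G$, which by almost simplicity has finite index, making $G$ abelian-by-finite---contradiction. For $\widetilde Z(G) = 1$, I observe that $\widetilde Z(G)$ is a definable characteristic subgroup which, by the remark preceding Fact~\ref{th:Fitting}, is finite-by-abelian; if it were non-trivial, almost simplicity would force it to have finite index in $G$, so $G$ would be (finite-by-abelian)-by-finite, hence nilpotent-by-finite by the discussion in Section~\ref{sec:bground}, and the centre of the resulting non-trivial nilpotent normal subgroup of finite index would be a non-trivial abelian normal subgroup of $G$, contradicting the semisimplicity just obtained.

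For the third claim, assume $H$ is infinite definable of infinite index with $\widetilde N_G(H) = G$. The icc$^0$-argument used to prove $\widetilde N_G(H)$ definable also supplies a uniform bound $m$ on the indices $[H : H \cap H^g]$ as $g$ ranges over $G$. Applying icc$^0$ to the family $\{H^g : g \in G\}$, the intersection $N := \bigcap_{g \in G} H^g$ coincides with a finite sub-intersection $\bigcap_{i=1}^k H^{g_i}$; in particular $N$ is definable, $G$-invariant by construction, and satisfies $[H : N] \le m^k$. Thus $N$ is an infinite definable normal subgroup of $G$ of infinite index, contradicting almost simplicity.

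For the fourth claim, suppose $\widetilde N_G(H)$ has finite index in $G$. If $H$ also has finite index, then $G$ is soluble-by-finite (pass from $H$ to its $G$-normal core); by semisimplicity the last non-trivial derived subgroup of a soluble normal subgroup of finite index would be a non-trivial abelian normal subgroup of $G$, so the soluble normal subgroup must be trivial, forcing $G$ to be finite---contradicting almost simplicity. Otherwise $H$ has infinite index; let $G_0$ be the $G$-normal core of $\widetilde N_G(H)$, a definable normal subgroup of $G$ of finite index. A short check shows that $G_0$ inherits almost simplicity: any definable normal subgroup of $G_0$ of infinite index has a $G$-normal core---a finite intersection of $G$-conjugates, hence definable---still of infinite index in $G$; and an abelian normal finite-index subgroup of $G_0$ gives rise, by the same device, to an abelian normal finite-index subgroup of $G$. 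Since $G_0 \le \widetilde N_G(H)$, the subgroup $H \cap G_0$ is almost normal in $G_0$ and evidently infinite definable of infinite index in $G_0$; applying the third claim to $G_0$ yields $\widetilde N_{G_0}(H \cap G_0) < G_0$, contradicting $\widetilde N_{G_0}(H \cap G_0) = G_0$. The only real technical subtlety is obtaining a definable witness from Schlichting's theorem, which I sidestep in the third claim by exhibiting $N$ directly as a finite intersection of conjugates of $H$.
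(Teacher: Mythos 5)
Your arguments for semisimplicity and for $\widetilde Z(G)=1$ are fine; the first matches the paper, and the second, while more circuitous (passing through nilpotent-by-finite and then taking the centre of the nilpotent finite-index subgroup), is correct. The paper's version is shorter: $\widetilde Z(G)'$ is a finite normal subgroup, hence trivial by almost simplicity, so $\widetilde Z(G)$ is abelian normal and trivial by semisimplicity.

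The third claim is where the argument breaks down. You assert that, given the uniform bound $m$ on $[H:H\cap H^g]$, the icc$^0$-condition forces $\bigcap_{g\in G}H^g$ to coincide with some finite sub-intersection $\bigcap_{i=1}^k H^{g_i}$. That does not follow. The icc$^0$-condition only bounds the \emph{length} of chains of intersections in which \emph{each step has index at least} the threshold $m'$ supplied by the condition; it says nothing about descending chains all of whose steps have small index. A uniformly commensurable family can easily have $\bigcap_g H^g$ of infinite index in $H$ while every finite sub-intersection has finite index (the descent never stops, but each step is tiny), and icc$^0$ does not rule this out. This is exactly why the paper invokes Schlichting's theorem (Theorem~\ref{th:commensurable}) at this point: it produces a definable subgroup $N$, \emph{commensurable} with the $H^g$ (rather than equal to their intersection) and invariant under inner automorphisms, hence normal; one then contradicts almost simplicity via $N$. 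You explicitly tried to ``sidestep'' Schlichting here, but there is no elementary substitute of the kind you propose.

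The fourth claim also has a gap. You reduce to the third claim applied to $G_0$, the $G$-normal core of $\widetilde N_G(H)$, and for this you need $G_0$ to be almost simple. Your argument that $G_0$ has no non-trivial definable normal subgroup of infinite index passes to the $G$-normal core of such a subgroup $K\le G_0$ --- but that core may well be trivial, in which case there is no contradiction with the almost simplicity of $G$. (Taking the \emph{product} of the finitely many $G$-conjugates of $K$ instead gives a non-trivial normal subgroup of $G$, but then one loses control of the index, so that route does not obviously work either.) Notice also that your reduction never uses the solubility hypothesis, so if it did go through it would prove a strictly stronger statement than the one in the lemma; the paper's proof uses solubility in an essential way: Schlichting gives a definable $\bar H$ normal in $\widetilde N_G(H)$ and commensurable with $H$, and the finite product of $G$-conjugates of $\bar H\cap N$ (with $N$ the core of $\widetilde N_G(H)$) is a definable, normal, infinite, \emph{soluble-by-finite} subgroup $S$ of $G$, whose soluble radical then violates semisimplicity. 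Without solubility there is no analogue of ${\rm Rad}(S)$ to exploit.
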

\begin{proof} If $A$ is an abelian normal subgroup of $G$ and $1\not=a\in A$, then $Z(C_G(a^G))$ is a definable abelian normal subgroup. It has either finite index or smaller dimension than $G$, contradicting almost simplicity in both cases.

Now $\widetilde Z(G)'$ is finite normal in $G$, whence trivial, so $\widetilde Z(G)$ is abelian normal, and must be trivial as well.

If an infinite definable infinite index subgroup $H$ is almost normal in $G$, then by Theorem \ref{th:commensurable} it is commensurable with a definable normal subgroup $N$ of $G$, again contradicting almost simplicity as commensurability implies that $N$ is also of infinite index in $G$.

Finally suppose $H$ is definable infinite and soluble-by-finite. If $\widetilde N_G(H)$ is of finite index in $G$ then $$N=\bigcap_{g\in G}\widetilde N_G(H)^g$$
is a normal subgroup of finite index in $G$. By Theorem \ref{th:commensurable} there is $\bar H$ normal in $\widetilde N_G(H)$ and commensurable with the soluble-by-finite group $H$. Now $\bar H$ is again soluble-by finite, as is $\bar H\cap N$. The product of the finitely many $G$-conjugates of $\bar H\cap N$ is a definable subgroup $S$ which is again soluble-by-finite, and normal in $G$. Hence ${\rm Rad}(S)$ is a non-trivial definable normal soluble subgroup of $G$, contradicting semisimplicity.\end{proof}
We shall in particular apply Lemma~\ref{lemma:almost} when $G$ is finite-dimensional and ${\rm dim}(H)=1$.

\begin{lemma}\label{lemma:involutions}Let $G \equiv \prod_{i\in I}G_i/\mathcal{U}$ be a non-abelian semisimple pseudofinite finite-dimensional group with additive and fine dimension. Then $G$ has involutions. Assume further that ${\rm dim}(G) \leqslant 3$ and that at least one of the following holds. \begin{enumerate}
\item $ m_2(G) \leqslant 2$.
\item $G$ contains a definable strongly embedded subgroup.
\end{enumerate} Then ${\rm dim}(G) = 3$ and $G$ has a definable normal subgroup of finite index isomorphic to ${\rm PSL}(F)$ where $F$ is a pseudofinite field.\end{lemma}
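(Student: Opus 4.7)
I plan to treat the three assertions of the lemma in sequence: existence of involutions, equality ${\rm dim}(G)=3$, and identification of a finite-index normal subgroup as ${\rm PSL}_2(F)$.

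For the first, suppose toward contradiction that $G$ contains no involution. By \L o\'s' theorem, almost all factors $G_i$ are involution-free, hence soluble by Feit--Thompson (Theorem~\ref{th:Feit-Thompson}). I would then argue that $G$ itself is soluble-by-finite: iterated commutator subgroups are definable in a pseudofinite group (via bounded commutator width), and the descending derived series $G \geq G' \geq G'' \geq \cdots$ is a chain of uniformly definable subgroups, so by the icc$^0$-condition stabilises modulo finite index after at most ${\rm dim}(G)+1$ strict drops of infinite index. Combined with solubility of almost all $G_i$, this forces $G$ to be soluble-by-finite, so ${\rm Rad}(G)$ has finite index in $G$. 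But semisimplicity gives ${\rm Rad}(G)=1$, forcing $G$ to be finite, contradicting pseudofiniteness.

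Assuming now that $G$ has involutions and ${\rm dim}(G)\le 3$, I would rule out the lower-dimensional cases to obtain ${\rm dim}(G)=3$. If ${\rm dim}(G)\le 1$, then $G$ is finite-by-abelian-by-finite by Theorem~\ref{th:infinite-abelian}; if ${\rm dim}(G)=2$, then $G$ is soluble-by-finite by the corresponding result from \cite{Wagner2020} recalled in the introduction. Either conclusion, combined with semisimplicity and infiniteness, gives a contradiction.

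For the identification, I would work in the pseudofinite presentation $G \equiv \prod G_i/\mathcal{U}$. In case~(1), the hypothesis $m_2(G) \le 2$ transfers by \L o\'s to almost all $G_i$, so each non-abelian simple section of $G_i$ has $2$-rank at most $2$ and therefore appears in the list of Theorem~\ref{th:finite2-rank2}. A comparison of orders ($\log_{q_i}|\cdot|$ in the relevant pseudofinite field) with ${\rm dim}(G)=3$ leaves ${\rm PSL}_2(q_i)$ as the only surviving candidate in the ultraproduct; the $8$-dimensional ${\rm PSL}_3$, ${\rm PSU}_3$ and the finite exceptions $A_7, M_{11}$ are excluded. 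In case~(2), \L o\'s transfers the strongly embedded subgroup to almost all $G_i$, allowing Bender's Theorem~\ref{th:Bender-Suzuki} to apply to $G_i/{\rm Rad}(G_i)$ and produce a normal subgroup in $\{{\rm PSL}_2(2^n), {\rm Sz}(2^{2n-1}), {\rm PSU}_3(2^n)\}$; the order-growths $q^5$ and $q^8$ for ${\rm Sz}$ and ${\rm PSU}_3$ are incompatible with ${\rm dim}(G)=3$, forcing ${\rm PSL}_2(2^{n_i})$ almost everywhere. In both cases, Point's theorem (strengthened by Ryten from $\equiv$ to $\cong$) identifies the ultraproduct of the ${\rm PSL}_2$ factors with ${\rm PSL}_2(F)$ for the pseudofinite field $F=\prod \mathbb{F}_{q_i}/\mathcal{U}$, giving the required definable finite-index normal subgroup.

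The main obstacle I anticipate is the first step: extracting soluble-by-finiteness of $G$ from mere solubility of almost all $G_i$, without any a~priori uniform bound on their derived lengths. This will require combining the definability of iterated commutator subgroups (from pseudofinite bounded commutator width) with the icc$^0$-condition guaranteed by finite-dimensionality. A secondary issue in the identification step is that ${\rm Rad}(G_i)$ need not correspond under ultraproducts to the trivial ${\rm Rad}(G)$, so one must carefully track how the radical-quotient structure of the $G_i$ interacts with the global dimension constraint ${\rm dim}(G)=3$.
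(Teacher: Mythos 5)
Your plan diverges from the paper's proof at every stage, and two of the divergences hide genuine gaps.

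\textbf{Existence of involutions.} You propose to derive a contradiction from the absence of involutions by arguing that $G$ is then soluble-by-finite. The step where you claim that the iterated commutator subgroups of $G$ are definable ``via bounded commutator width'' is not justified: there is no uniform bound on commutator width for arbitrary finite (soluble) groups, so $G'$ need not be definable in a pseudofinite group, and the derived series of $G$ is not a chain of \emph{uniformly} definable subgroups. Moreover, even if a bound were available, the icc$^0$-condition only limits the number of \emph{infinite}-index drops; it does not force the derived series to stabilise, so ``almost all $G_i$ soluble $\Rightarrow G$ soluble-by-finite'' does not follow. The paper avoids this entirely: semisimplicity transfers to almost all $G_i$, so $C_{G_i}({\rm Soc}(G_i))=1$ and ${\rm Soc}(G_i)$ is a direct product of non-abelian finite simple groups, which have involutions by Feit--Thompson. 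The involutions are found directly in the socle, not by a contradiction argument.

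\textbf{Reduction to ${\rm dim}(G)=3$.} You rule out ${\rm dim}(G)=2$ by invoking the dimension-$2$ soluble-by-finiteness result from \cite{Wagner2020}. But the paper explicitly notes that that result from \cite{Wagner2020} relies on CFSG, and the CFSG-free version (Theorem~\ref{th:rank2}) is a \emph{consequence} of the present lemma in this paper's organisation, so appealing to it here would be circular. The paper does not establish ${\rm dim}(G)=3$ as a preliminary step at all; instead, it constructs the definable normal subgroup $N\cong X(F)$ and then reads off both ${\rm dim}(N)=3$ and $X={\rm PSL}_2$ simultaneously from the constraint ${\rm dim}(N)\leqslant 3$ together with \cite[Proposition 6.1]{Elwesetal}, which in turn forces ${\rm dim}(G)=3$.

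\textbf{Definability of the normal subgroup.} Your identification step passes from ``$G_i$ has a normal subgroup ${\rm PSL}_2(q_i)$ for almost all $i$'' to ``$G$ has a definable normal subgroup ${\rm PSL}_2(F)$'' without addressing why $\prod_i X(q_i)/\mathcal U$ is \emph{definable} in $G$. This is exactly the role of the Ellers--Gordeev theorem (a CFSG-free instance of Thompson's conjecture) in the paper: $X(q_i)=x_i^{G_i}x_i^{G_i}$ exhibits the factor as a uniformly definable set, which is what makes the ultraproduct $N$ a definable subgroup. Without this input your argument does not produce a definable subgroup, only an internal one. Your ``secondary issue'' about ${\rm Rad}(G_i)$ vs.\ ${\rm Rad}(G)$ is a real concern for invoking Bender; the paper handles it by transferring semisimplicity (hence triviality of the radical) to almost all $G_i$ rather than by a dimension argument.
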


\begin{proof}Assume first that $G=\prod_{i\in I}G_i/\mathcal{U}$. We shall argue modulo $\mathcal{U}$ even if only implicitly. 

By semisimplicity $C_{G_i}({\rm Soc}(G_i))=1$; thus $G_i \hookrightarrow {\rm Aut}({\rm Soc}(G_i))$. Now the non-trivial socle ${\rm Soc}(G_i)$ of the finite group $G_i$ is a direct product of non-abelian simple finite groups; the Feit-Thompson Theorem then implies that ${\rm Soc}(G_i)$ has involutions for almost all $i$, and so does $G$.

Assume now that ${\rm dim}(G) \leqslant 3$. If (1) holds, then, by Theorem~\ref{th:finite2-rank2}, each simple factor in the direct product ${\rm Soc}(G_i)$ is of $2$-rank $2$ and hence ${\rm Soc}(G_i)$ is simple. Further, again by Theorem~\ref{th:finite2-rank2}, ${\rm Soc}(G_i)=X(q_i)$ where $X\in \{{\rm PSL}_2, {\rm PSL}_3, {\rm PSU}_3\}$ and $q_i$ is odd. If (1) does not hold and (2) holds, then Theorem~\ref{th:Bender-Suzuki} implies that $G_i$ has a normal subgroup $X(q_i)$ where $X\in \{{\rm PSL}_2, {\rm PSU}_3, {\rm Sz}\}$ and $q_i$ is a power of $2$. Note that since $|G_i|\le|{\rm Aut}(X(q_i))|$, the $q_i$ grow without a bound when $i$ varies. Therefore, in either of the two cases, by \cite{Ellers-Gordeev1998}, there is $x_i \in X(q_i)$ so that $X(q_i) =  x_i^{X(q_i) } x_i^{X(q_i) }$. (The result in \cite{Ellers-Gordeev1998} states that Thompson's Conjecture holds for finite simple (twisted) Chevalley groups $X(q)$, provided that $q > 8$. This does not use CFSG. Note however that while Thompson's Conjecture is known to hold for `almost all' finite simple groups (see \cite[Introduction]{Ellers-Gordeev1998}), this more general result uses a case-by-case analysis provided by CFSG.) Since $X(q_i) \unlhd G_i$ we see that $X(q_i) = x_i^{G_i} x_i^{G_i}$, and there is a definable normal subgroup $N=\prod_{i\in I}X(q_i)/\mathcal{U}=X(F)$, where $F=\prod_{i\in I} \mathbb{F}_{q_i}/\mathcal{U}$ is a pseudofinite field and $X\in \{{\rm PSL}_2,{\rm PSL}_3,{\rm PSU}_3,{\rm Sz}\}$. As ${\rm dim}(N) \leqslant 3$, we have ${\rm dim}(N) = 3$ and $X= {\rm PSL}_2$ by \cite[Proposition 6.1]{Elwesetal}.

Now the field $F$ and the linear structure $X(F)$ are definable in $G$. It follows that the result also holds for any group elementarily equivalent to $G$ (see \cite[Chapter~5]{Ryten2007}).\end{proof}

\begin{lemma}\label{lemma:nilp-by-finite}Let $G$ be a pseudofinite finite-dimensional group with additive and fine dimension. Assume that ${\rm dim} (G) \leqslant 3$, and that there is a definable subgroup $H < G$ with ${\rm dim}(H)=2$ which is not almost normalised by $G$. Then either $\widetilde{Z}(H)$ is finite, or ${\rm dim}(\widetilde Z(G))\ge 1$.\end{lemma}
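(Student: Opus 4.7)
Set $K := \widetilde{Z}(H)$ and suppose $K$ is infinite; the aim is to show $\dim \widetilde{Z}(G) \geqslant 1$. The essential tool will be the Hempel commutation symmetry lemma (Fact~\ref{fact:com:sym}); in our icc$^0$-setting the required uniformity of the various commensurability relations is automatic.

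First I would note that $K \subseteq \widetilde{C}_G(H)$ by the very definition of $\widetilde{Z}(H)$, so $\widetilde{C}_K(H) = K$ and the hypothesis $K \lesssim \widetilde{C}_K(H)$ is trivial. Fact~\ref{fact:com:sym} applied to $(H,K)$ then gives $H \lesssim \widetilde{C}_H(K)$, so setting $M := \widetilde{C}_G(K)$ we have $\dim M \geqslant 2$. If $\dim M = 3$, i.e. $G \lesssim M$, then Fact~\ref{fact:com:sym} applied to $(G,K)$ in the reverse direction produces $K \lesssim \widetilde{C}_K(G) = K \cap \widetilde{Z}(G)$, so $\dim \widetilde{Z}(G) \geqslant \dim K \geqslant 1$ and we are done.

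The main obstacle is the remaining case $\dim M = 2$, where $M \simeq H$. The plan is to iterate the trick with $L := \widetilde{C}_G(M)$: Fact~\ref{fact:com:sym} applied to $(M,K)$, noting that $M = \widetilde{C}_G(K)$ trivially almost centralises $K$, yields $K \lesssim \widetilde{C}_K(M) = K \cap L$ and hence $\dim L \geqslant 1$. The sub-case $\dim L = 3$ is closed as above: Fact~\ref{fact:com:sym} applied to $(G,M)$ gives $M \lesssim \widetilde{Z}(G)$, so $\dim \widetilde{Z}(G) \geqslant 2$. The delicate sub-cases $\dim L \in \{1,2\}$ must be ruled out by assuming $\widetilde{Z}(G)$ finite and deriving a contradiction. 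The key input is that for every $k \in K \setminus \widetilde{Z}(G)$ the centraliser $C_G(k)$ has dimension exactly $2$ and contains $C_H(k)$, which is of finite index in $H$, so $C_G(k) \simeq H$; conjugating gives $C_G(k^g) \simeq H^g$, and if both $k$ and $k^g$ lie in $K \setminus \widetilde{Z}(G)$ then $H \simeq H^g$, forcing $g \in \widetilde{N}_G(H)$. This pins down $\widetilde{N}_G(K) = \widetilde{N}_G(H)$ (of dim $2$).

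Combining this with the structural information on $L$ (essentially abelian $M$ when $\dim L = 2$, $L \simeq K$ when $\dim L = 1$), I would apply Schlichting's theorem (Theorem~\ref{th:commensurable}) inside $\widetilde{N}_G(K)$ to the uniformly commensurable family $\{K^g : g \in \widetilde{N}_G(K)\}$, producing a subgroup $\bar{K}$ normal in $\widetilde{N}_G(K)$ and commensurable with $K$. The pairwise almost-disjoint conjugates $\{\bar{K}^g : g \in G/\widetilde{N}_G(K)\}$ form a $1$-dimensional family of infinite subgroups whose union has dimension $2$, and comparing this with the dimension of the normal closure $\bar{K}^G$ (together with the almost-centralising data from $M$ and $L$) should yield the required contradiction and hence eliminate the case $\dim M = 2$.
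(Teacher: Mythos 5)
Your approach is genuinely different from the paper's, but it does not close the central case and so does not constitute a proof. The paper argues directly from the hypothesis that $H$ is not almost normal: picking $x\in G\setminus\widetilde N_G(H)$, a dimension count forces $\dim(H\cap H^x)=1$, so $H\cap H^x$ is finite-by-abelian-by-finite and $\widetilde Z(H\cap H^x)\simeq H\cap H^x$. Working with $L=\widetilde Z(H\cap H^x)\cap\widetilde C_H(\widetilde Z(H))$ (which is commensurable with $\widetilde Z(H\cap H^x)$ because $H\lesssim\widetilde C_H(\widetilde Z(H))$ by Fact~\ref{fact:com:sym}), one shows $\widetilde Z(H\cap H^x)\lesssim\widetilde Z(H)$ and symmetrically $\widetilde Z(H\cap H^x)\lesssim\widetilde Z(H^x)$. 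Since any element of $\widetilde Z(H)\cap\widetilde Z(H^x)$ almost centralises $HH^x$, which has dimension $3$, that intersection lies inside $\widetilde Z(G)$, giving $\dim\widetilde Z(G)\geqslant 1$. Your proof never exploits the datum that $H$ has an essentially different conjugate, which is exactly what makes the paper's argument land.

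The gap is in your treatment of the remaining case $\dim M=2$ (where $M=\widetilde C_G(K)$, $K=\widetilde Z(H)$). The reductions to $\dim M=3$ and $\dim L=3$ via Fact~\ref{fact:com:sym} are fine, and the observation that for $k\in K\setminus\widetilde Z(G)$ one has $C_G(k)\simeq H$, hence $\widetilde N_G(K)\leqslant\widetilde N_G(H)$, is also sound. But from there the argument degenerates into a sketch: you apply Schlichting to get a normal $\bar K$ inside the dimension-$2$ group $\widetilde N_G(K)$, observe its conjugates are pairwise almost-disjoint, and assert that "comparing this with the dimension of the normal closure \dots\ should yield the required contradiction." This is not a contradiction. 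A $1$-dimensional family of pairwise almost-disjoint $1$-dimensional subgroups having union of dimension $2$ is perfectly consistent with $\dim G=3$; nothing forces the union or the normal closure to exceed $3$, nor have you produced a definable normal subgroup of $G$ to contradict anything (and indeed the lemma has no almost-simplicity hypothesis to contradict). You would need a concrete numerical or structural clash, and none is supplied; the sub-cases $\dim L\in\{1,2\}$ are left entirely open. As it stands, the argument in the main case is a plan rather than a proof.
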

\begin{proof} Consider $x\in G \setminus\widetilde N_G(H)$. Then ${\rm dim}(H\cap H^x)\ge 1$, as otherwise ${\rm dim}(HH^x)=4 > {\rm dim} (G)$. Also ${\rm dim}(H\cap H^x) < 2$ by the choice of $x$. So ${\rm dim}(H\cap H^x)=1$ and $H\cap H^x$ is finite-by-abelian-by-finite. Thus $\widetilde{Z}(H\cap H^x)\simeq H\cap H^x$.

Now assume that $\widetilde{Z}(H)$ is infinite, and put $L=\widetilde{Z}(H\cap H^x) \cap \widetilde{C}_H(\widetilde{Z}(H))$. Since $H\lesssim \widetilde{C}_H(\widetilde{Z}(H))$ (Fact~\ref{fact:com:sym}), we have 
$L\simeq \widetilde{Z}(H\cap H^x)$.

Suppose first that $\widetilde{Z}(H) \cap \widetilde{Z}(H\cap H^x)$ is finite. If $y\in L$, then $C_H(y)$ almost contains both $\widetilde{Z}(H)$ and $\widetilde{Z}(H\cap H^x)$, so ${\rm dim}(C_H(y))=2$ and $y\in \widetilde{Z}(H)$. Thus $\widetilde{Z}(H\cap H^x) \lesssim L\lesssim \widetilde{Z}(H)$.

On the other hand, if $\widetilde{Z}(H) \cap \widetilde{Z}(H\cap H^x)$ is infinite, we again get $\widetilde{Z}(H\cap H^x)\lesssim \widetilde{Z}(H)$ since ${\rm dim}(\widetilde Z(H\cap H^x))=1$.

Similarly $\widetilde{Z}(H\cap H^x) \lesssim \widetilde{Z}(H^x) $. So 
$\widetilde{Z}(H\cap H^x) \lesssim \widetilde{Z}(H)\cap\widetilde Z(H^x)\le\widetilde Z(G)$, and ${\rm dim}(\widetilde Z(G))\ge 1$.\end{proof}

\subsection{The identification lemma}\label{section:identification}In this section we prove Lemma~\ref{lemma:identification} which plays a key role in the proof of both Theorem~\ref{th:rank2} and Theorem~\ref{th:main}.
We use similar arguments in dimension $2$ and $3$, but in dimension $2$ this will lead to a contradiction, while in dimension $3$ we shall identify ${\rm PSL}_2(F)$. However, the case of dimension $3$ is more complicated due to the possible existence of definable proper subgroups of dimension $2$. So we start with the analysis of such subgroups.

\begin{lemma}\label{lemma:dim2subgroups}Let $G$ be an almost simple pseudofinite finite-dimensional group with additive and fine dimension. Assume that ${\rm dim} (G)=3$ and that all definable subgroups of dimension $2$ are soluble-by-finite. If $L$ is a definable subgroup of dimension $2$, then:\begin{enumerate}
\item\label{lemma:item1} $B=\widetilde N_G(L)$ is a maximal definable subgroup of dimension $2$. Moreover, $B=N_G(B)=\widetilde N_G(B)$; if $Z=\widetilde Z(B)$ then $Z$ is finite, and $B/Z$ is a Frobenius group with Frobenius kernel $U/Z$ and Frobenius complement $T/Z$, where $U=\widetilde C_G(U)$ and $T=\widetilde C_B(T)$ have dimension $1$ and are finite-by-abelian. 
\item\label{lemma:item2} There is a pseudofinite field $F$ such that $U/Z\cong F^+$ and $T/Z$ embeds into $T^\times$ as a subgroup of finite index. In particular $T/Z$ is abelian, and $T$ has only finitely many elements of any given order. 
\item\label{lemma:item3} If $C=U_CT_C$ is another (maximal) subgroup of dimension $2$ and $Z_C:=\widetilde{Z}(C)\not=1$, then $U^g\not\simeq T_C$ for any $g\in G$.
\item\label{lemma:item4} If $g\in G\setminus B$ then $U\cap U^g=1$. If $B$ contains involutions, then either $B$ is strongly embedded, or there is an involution in $T\setminus Z$ and no involution in $U\setminus Z$. 
\item\label{lemma:item5} Suppose $Z\not=1$. If $g\in G\setminus B$ then $B\cap B^g$ is a finite index subgroup of $T^u$ for some $u\in U$. Moreover, if $x\in B$ with ${\rm dim}(C_G(x))=2$ then $x\in Z$.\end{enumerate}\end{lemma}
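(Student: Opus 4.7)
The plan is to treat $B$ as a Borel-like subgroup modelled on $\mathrm{PSL}_2$: soluble-by-finite of dimension $2$, with a dimension-$1$ Fitting part $U$ playing the role of the unipotent radical, a dimension-$1$ complement $T$ playing the role of a torus, and a finite centre $Z$. The five claims are then corollaries of this structural picture plus the pseudofinite field interpretation in (\ref{lemma:item2}).

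For part (\ref{lemma:item1}), Lemma~\ref{lemma:almost} applied to the infinite soluble-by-finite $L$ forces $\widetilde N_G(L)$ to have infinite index in $G$, hence dimension $2$ by Fact~\ref{fact:Lascar-eq}. Schlichting's Theorem (Theorem~\ref{th:commensurable}) applied to the conjugates of $L$ inside $\widetilde N_G(L)$ lets us replace $L$ by a normal commensurable subgroup, so set $B := \widetilde N_G(L)$. Maximality of $B$ follows because any definable overgroup $B' \supsetneq B$ of dimension $2$ would contain $L$ with finite index and so lie in $\widetilde N_G(L)=B$. The same argument applied to $B$ itself gives $\widetilde N_G(B) = N_G(B) = B$: the only alternative is that $\widetilde N_G(B)$ has finite index in $G$, but then Schlichting applied to the finitely many commensurability classes of conjugates of $B$ would produce a proper definable normal subgroup of $G$ of dimension $2$, contradicting almost-simplicity. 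Next, let $U$ be the definable Fitting subgroup of $B$ (Fact~\ref{th:Fitting}); it must have dimension exactly $1$, for it cannot have dimension $0$ (else $B$ is finite-by-semisimple of dimension $2$, contradicting solubility-by-finite) nor dimension $2$ (else $B$ is nilpotent-by-finite, and its characteristic abelian finite-index subgroup yields, via Schlichting, a proper normal abelian subgroup of $G$ of dimension $2$, contradicting semisimplicity in Lemma~\ref{lemma:almost}). The equality $U = \widetilde C_G(U)$ then holds because any strict dimension-$2$ overgroup would conflict with the maximality of $B$ via a Schlichting-and-almost-normaliser argument. Pick a dimension-$1$ definable complement $T \leq B$ satisfying $T = \widetilde C_B(T)$; then $Z := \widetilde Z(B)$ sits inside $\widetilde C_B(U) \cap \widetilde C_B(T) \sim U \cap T$, hence has dimension $0$ and is finite. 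The Frobenius property of $B/Z$ then follows from the disjointness of the almost-centralisers of $U/Z$ and $T/Z$ inside $B/Z$.

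For part (\ref{lemma:item2}), the action of $T/Z$ on the finite-by-abelian $U/Z$ is fixed-point-free modulo $Z$, and the standard pseudofinite/finite-dimensional field-interpretation machinery produces a pseudofinite field $F$ with $U/Z \cong F^+$ and $T/Z$ embedded as a finite-index subgroup of $F^\times$; abelianness of $T/Z$ and the torsion bound on $T$ follow at once. Part (\ref{lemma:item3}) is then immediate: $U^g$ has the additive-torsion profile of $F^+$ (purely of characteristic $p$), whereas $T_C$ has the torsion profile of a finite-index subgroup of $F_C^\times$ (elements of arbitrarily many distinct orders), so commensurability between $U^g$ and $T_C$ is impossible. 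For part (\ref{lemma:item4}), a nontrivial $u \in U \cap U^g$ would place $U^g$ inside $\widetilde C_G(u)$ which almost contains $U$, forcing $U^g \simeq U$ via $U = \widetilde C_G(U)$, hence $g \in \widetilde N_G(U) \leq B$; so $U \cap U^g = 1$ for $g \notin B$. Strong embedding then reduces to a case analysis on whether an involution in $B$ lies in $U \setminus Z$ (in which case, together with $U \cap U^g = 1$, $B$ is strongly embedded) or in $T \setminus Z$ (with $U \setminus Z$ involution-free).

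For part (\ref{lemma:item5}), with $Z \neq 1$ and $g \notin B$, the intersection $B \cap B^g$ has dimension at most $1$ (else $g \in \widetilde N_G(B) = B$); its Fitting part is trivial by (\ref{lemma:item4}), so $B \cap B^g$ lies in a complement and is a finite-index subgroup of $T^u$ for some $u \in U$. The last assertion follows because $C_G(x)$ of dimension $2$ is a definable soluble-by-finite dimension-$2$ subgroup with $x$ in its almost centre, and the maximality of $B$ together with a commensurability comparison forces $x \in \widetilde Z(B) = Z$. The main obstacle is thus part (\ref{lemma:item1}): pinning down the Frobenius decomposition $B/Z = (U/Z) \rtimes (T/Z)$ and the finiteness of $Z$ requires a delicate balancing of the Fitting machinery (Fact~\ref{th:Fitting}), Schlichting's Theorem, the icc$^0$-condition, and the almost-normaliser calculus from Subsection~\ref{subsec:com}; once (\ref{lemma:item1}) is established, the remaining parts reduce to dimension calculus together with the field interpretation of (\ref{lemma:item2}).
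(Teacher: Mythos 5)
Your proposal has the right global picture but several of the key steps are wrong or circular, and the route you take differs from the paper's in ways that create genuine gaps.

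In part (\ref{lemma:item1}) you take $U=\mathrm{Fitt}(B)$ and then try to rule out $\dim(\mathrm{Fitt}(B))=2$ by arguing that a characteristic abelian finite-index subgroup of $B$ would, via Schlichting, become a proper normal abelian subgroup of $G$. This does not work: $B$ is \emph{not} almost normal in $G$ (indeed $\widetilde N_G(B)=B$), so Schlichting applied to $G$-conjugates of any subgroup of $B$ does not produce anything normal in $G$. Moreover nilpotent-by-finite need not be abelian-by-finite, so the ``characteristic abelian finite-index subgroup'' need not exist. The paper instead kills this case at the outset by invoking Lemma~\ref{lemma:nilp-by-finite}: since $\widetilde Z(G)=1$ and $B$ is not almost normal, $\widetilde Z(B)$ is finite, and this finiteness is then used repeatedly. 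In your write-up the finiteness of $Z$ is deduced \emph{after} the decomposition $B=UT$ with $U=\widetilde C_G(U)$, $T=\widetilde C_B(T)$ — but establishing these identities (and the very existence of the dimension-$1$ complement $T$) is exactly where the finiteness of $Z$ is needed, so the argument is circular. The paper constructs $U$ and $T$ explicitly as almost-centralisers $\widetilde C_N(C_N(a^N/\widetilde Z(N)))$ and $\widetilde C_N(C_N(x_0))$, computes $Z=U\cap T$, and proves the Frobenius decomposition and $N_B(T)=T$ directly; the step $N_N(T)=T$ (needed for $N=B$) requires the fixed-field argument in part (\ref{lemma:item2}), which you omit entirely.

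In part (\ref{lemma:item3}) your ``torsion-profile'' argument implicitly assumes $\mathrm{char}(F)>0$, but a pseudofinite field can have characteristic $0$, in which case $F^+$ is torsion-free and the comparison with $F_C^\times$ gives no immediate contradiction. The paper's argument uses the hypothesis $Z_C\ne1$ (which you never use): it picks $1\ne z\in Z_C$ of finite order $\ell$, shows $\Omega_\ell=\{t\in U^g:o(t)=\ell\}$ is finite, and derives $C\simeq C_G(z)\simeq B^g$ from $C_G(\Omega_\ell)\gtrsim B^g$, reaching a contradiction with $U^g\simeq T_C$. In part (\ref{lemma:item4}), your forward direction (``$U^g\leqslant \widetilde C_G(u)$ which almost contains $U$, forcing $U^g\simeq U$'') breaks down when $\dim(C_G(u))=2$: then $U$ and $U^g$ need not be commensurable to each other, and the paper specifically invokes part (\ref{lemma:item3}) to rule out $U$ or $U^g$ landing on the semisimple part $T_C$ of $C_G(u)$. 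Finally, in part (\ref{lemma:item5}) the second assertion is genuinely delicate: the paper separates $x\in U$ (easy, via $U\cap U^c=1$) from $x\in T\setminus Z$ (requires a fairly intricate argument with $C_G(x)\simeq U_xT_x$ and disjointness of $U$-conjugates of $T\setminus Z$); ``maximality plus a commensurability comparison'' does not capture this and in particular does not explain why the case $x\in T\setminus Z$ is impossible.
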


\begin{proof} Put $N=\widetilde N_G(L)$. Then ${\rm dim}(N)=2$ by Lemma \ref{lemma:almost}. If $K$ is any group commensurable with $L$ then $K\le \widetilde N_G(K)=\widetilde N_G(L)$. This shows maximality of $N$, as well as $N=\widetilde N_G(N)=N_G(N)$.

Since $\widetilde Z(G)$ is trivial and $N$ is not almost normal in $G$, Lemma \ref{lemma:nilp-by-finite} yields that $Z:=\widetilde Z(N)$ is finite. But $\widetilde Z(N)$ contains all finite normal subgroups of $N$. As $N$ is soluble-by-finite by assumption, there is $a\in N\setminus\widetilde Z(N)$ such that $\langle a^N\rangle/\widetilde Z(N)$ is abelian; it is infinite since $a\notin\widetilde Z(N)$. Then $C_N(a^N/\widetilde Z(N))$ is an infinite definable subgroup containing $a^N$; if it were of dimension $2$ then $a^N\subseteq Z$, a contradiction. Thus ${\rm dim}(C_N(a^N/\widetilde Z(N)))=1$. Define: \begin{enumerate}[(1)]
\item $U:=\{x\in N : C_N(x)\gtrsim C_N(a^N/\widetilde Z(N))\}=\widetilde C_N(C_N(a^N/\widetilde Z(N)))$.
\end{enumerate}
Clearly $U$ only depends on the commensurability class of $C_N(a^N/\widetilde Z(N))$. If ${\rm dim}(U)=2$, then $N\lesssim U=\widetilde C_N(C_N(a^N/\widetilde Z(N)))$, so $C_N(a^N/\widetilde Z(N))\lesssim\widetilde C_N(N)=Z$ which is finite, a contradiction. Hence $U\simeq C_N(a^N/\widetilde Z(N))$, and $U=\widetilde Z(U)$ is finite-by-abelian. Clearly $N\le N_G(U)\le\widetilde N_G(U)$, and we have equality by maximality and almost simplicity of $G$.

Now $N/U$ is $1$-dimensional, whence finite-by-abelian-by-finite. For any $x_0\in\widetilde C_N(N/U)\setminus U$ the orbit $(x_0U)^N$ in $N/U$ is finite. Then ${\rm dim}(x_0^N)={\rm dim}(U)=1$ and $C_N(x_0)$ has dimension $1$. Note that $x_0\notin U$ implies finiteness of $C_N(x_0)\cap U$. So we can define the following:
\begin{enumerate}[(2)]
\item $T:= \{x\in N : C_N(x) \gtrsim C_N(x_0)\}=\widetilde C_N(C_N(x_0))$. \end{enumerate}
\begin{enumerate}[(3)]
\item $B:=UT$.
\end{enumerate}
Note that $\widetilde N_G(B)=\widetilde N_G(N)=N$, as $B$ has finite index in $N$.
As ${\rm dim}(C_N(x_0)U)=2$, the index of $C_N(x_0)U$ in $N$ is finite, so 
$$Z=\widetilde Z(N)=\widetilde C_N(C_N(x_0)U)=\widetilde C_N(C_N(x_0))\cap\widetilde C_N(C_N(a^N/\widetilde Z(N)))=T\cap U.$$
Thus ${\rm dim}(T)=1$; it follows that $T\simeq C_N(x_0)$ and $T=\widetilde C_N(T)$ is finite-by-abelian. Moreover, any $N$-conjugate of $T$ which is commensurable with $T$ must be equal to $T$, and $\widetilde N_N(T)=N_N(T)$.
For any $g\in N\setminus N_N(T)$ we have $T\cap T^g=Z$, since $C_N(h)$ has dimension $2$ for any $h\in T\cap T^g$. Moreover $N_B(T)=N_U(T)\times_Z T$; it cannot have dimension $2$ as otherwise it would be finite-by-abelian-by-finite, contradicting finiteness of $\widetilde Z(N)$. Hence $N_U(T)$ is finite and $N_U(T)\le\widetilde C_U(T)=Z$. Thus $N_B(T)=T$.

Put $A:=U/Z$, $H:=T/Z$ and $\hat{B}:=B/Z$. Clearly $A\cap H=1$ and $\widetilde{Z}(\hat{B})=1$, so $\hat{B}=A \rtimes H$ and $H \cap H^g=1$ for any $g\in \hat B\setminus H$. This means that $\hat{B}$ is a pseudofinite Frobenius group with a definable Frobenius kernel $A$. Therefore, by the structure of finite Frobenius groups, the conjugates of $H$ cover $\hat B\setminus A$, and for any definable subgroup $K$ of $\hat{B}$ with 
$K\cap A=1$ and $AK=\hat{B}$ there is $a\in A$ with $K=H^a$. 

Now, let $M\le N$ be definable of dimension $1$ with $M\cap U$ finite. As above $T_1:=\widetilde C_N(M)$ is of dimension $1$, commensurable with $M$, satisfies $T_1\cap U=Z$, and $N_{UT_1}(T_1)=T_1$. Then $\hat B_1=UT_1/Z\le\hat B$ is also a Frobenius group with Frobenius kernel $A$. Since $H_1=H\cap\hat B_1$ satisfies 
$H_1\cap A=1$ and $AH_1=\hat B_1$, there is $u\in U$ with $H^u_1=T_1/Z$.
Thus $T^u\simeq (T\cap B_1)^u=T_1\simeq M$, so $T_1=\widetilde C_N(M)=\widetilde C_N(T^u)=\widetilde C_N(T)^u=T^u$. In particular, any definable subgroup of $N$ of dimension $1$ is commensurable either with $U$ or with a $U$-conjugate of $T$. Moreover, for any $g\in N$ there is $u\in U$ with $T^g=T^u$, so $N=U\,N_N(T)$.
This shows (\ref{lemma:item1}), once we know that $N_N(T)=T$ (which is proven below), since that implies $N=B$.

We say that a subset $X$ of $A$ is \emph{almost $H$-invariant} if there is some finite subset $H_0\subseteq H$ so that for any $h\in H$ we have $X^h\subseteq \bigcup_{h_0\in H_0}X^{h_0}$. Any finite almost $H$-invariant subset of $A$ is almost central in $\hat{B}$ and thus trivial. So, as $A$ is (finite and characteristic)-by-abelian, it is abelian. 

Now, let $R:={\rm End}_H(A)$ be the ring of endomorphisms of $A$ generated by $H$. Let $ r\in R \setminus \{0\}$. We show that $r$ is an automorphism of $A$: Since $H$ is finite-by-abelian, both ${\rm ker}(r)$ and ${\rm im}(r)$ are almost $H$-invariant definable subgroups of $A$ and hence either finite, and thus trivial, or finite index subgroups of the $1$-dimensional group $A$. Now ${\ker}(r)$ cannot have finite index in $A$ as otherwise ${\rm im}(r)$ is finite, whence trivial, so we would get $r=0$. So any $r\in R \setminus \{0\}$ is injective; since a definable injective map from a pseudofinite group to itself must be surjective, $R$ acts on $A$ by automorphisms. In particular it is invertible. By \cite[Proposition 3.6 or Corollary 3.10]{Wagner2020} the field of fractions $F$ of $R$ is an interpretable skew field; $A \cong F^+$ and $H \hookrightarrow F^\times$. Since ${\rm dim}(H)={\rm dim}(A)={\rm dim}(F)$, the image of $H$ has finite index in $F^\times$. As $F$ is pseudofinite and any finite skew field is commutative by Wedderburn’s Little Theorem, $F^\times$ is commutative and contains only finitely many elements in each finite order; clearly the same holds for $H$.

Finally, suppose $g\in N_N(T)$. Then $g$ induces an automorphism $\sigma$ of $F$ of finite order via conjugation: If $r\in R$ is given by $x\mapsto\prod_i x^{h_i}$, then $\sigma(r)$ is given by $x\mapsto\prod_i x^{h_i^g}$, and extends to $F$ by $\sigma(r^{-1}r')=\sigma(r)^{-1}\sigma(r')$. 
Let $F_0$ be the fixed field of $\sigma$. Then $[F:F_0]=o(\sigma)$ and $1={\rm dim}(F)=[F:F_0]\,{\rm dim}(F_0)\ge[F:F_0]$, as ${\rm dim}(F_0)\ge 1$. Hence $\sigma$ fixes $F$, so $g\in C_N(T/Z)\le\widetilde C_N(T)=T$. This shows (\ref{lemma:item2}), as well as $N_N(T)=T$ and $N=B$. So we have proven (\ref{lemma:item1}) and (\ref{lemma:item2}).

From now on, we think of $U$ and $T$ as the `unipotent' and the `semisimple' parts of $B$, respectively.

In order to show (\ref{lemma:item3}), assume that there is $g\in G$ so that $U^g \simeq T_C$. Then $\widetilde{N}_G(T_C)=\widetilde N_G(U^g)= B^g$. Therefore $T_C\leqslant B^g$; since $T_C$ is finite-by-abelian, $T_C\le \{x\in B^g:U^g\lesssim C_{B^g}(x)\}=U^g$. By Part (\ref{lemma:item2}), $T_C/Z_C$ contains only finitely many elements of any given order. But $T_C$ has finite index in $U^g$, and $U^g$ is finite-by-abelian. Therefore $U^g$ only contains finitely many elements of any given order.

Let $1\neq z\in Z_C$ be of order $\ell$ and put $\Omega_\ell=\{t\in U^g: o(t)=\ell\}$. Then $\Omega_\ell$ is finite and contains $z$. So $C_G(\Omega_\ell) \simeq N_G(\Omega_\ell) \ge B^g$. This implies $z\in Z_C\cap Z^g$, whence $ C\simeq C_G(z) \simeq B^g$ and $U^g\simeq T_C=\widetilde N_C(T_C)\simeq\widetilde N_{B^g}(U^g)=B^g$, a contradiction.

To show (\ref{lemma:item4}), consider $g\in G\setminus B$. Suppose that there is non-trivial $y\in U \cap U^g$, and put $C=C_G(y)$. Then $C$ almost contains $U$ and $U^g$ and must be of dimension $2$. Clearly $y\in Z_C=\widetilde{Z}(C)$, so neither $U$ nor $U^g$ can be commensurable with $T_C$ (the semisimple part of $C$) by Part (\ref{lemma:item3}). Hence both are commensurable with $U_C$, so $U\simeq U^g$ and $g\in \widetilde N_G(U)=B$, a contradiction. In particular $Z\cap Z^g \leqslant U\cap U^g=1$.

Next, note if char$(F)\not=2$ then $I(U\setminus Z)=\emptyset$, and if char$(F)=2$ then $I(T\setminus Z)=\emptyset$. In the latter case, since the $U$-conjugates of $T\setminus Z$ cover $B\setminus U$, we have $I(B)=I(U)$, so 
$I(B \cap B^g)\leqslant U\cap U^g=1$, and $B$ is strongly embedded.

To show (\ref{lemma:item5}), assume $Z\not=1$ and consider $g\in G\setminus B$.
As ${\rm dim}(B)={\rm dim}(B^g)=2$, ${\rm dim}(G)=3$ and $B\not\simeq B^g$, we have ${\dim}(B\cap B^g)=1$. Hence $B\cap B^g$ is commensurable with $U$ or a $U$-conjugate of $T$, and with $U^g$ or a $U^g$-conjugate of $T^g$. Clearly $U\simeq B\cap B^g\simeq U^g$ is impossible, as $g\notin\widetilde N_G(U)=B$. Part (\ref{lemma:item3}) excludes the mixed case, so $B\cap B^g\simeq T^u$ for some $u\in U$. Hence $$B\cap B^g \leqslant \widetilde{N}_B(B\cap B^g)=\widetilde{N}_B(T^u)=\widetilde{N}_B(T)^u=T^u.$$

Finally, consider $x\in B$ with ${\rm dim}(C_G(x))=2$. Note that $x\in \widetilde{Z}(C_G(x))$, which is finite by Lemma~\ref{lemma:nilp-by-finite}. So the order $o(x)$ is finite. If $x\in U$ then $ x\in U\cap U^c$ for any $c\in C_G(x)$, so $c\in B$ and $C_G(x)\le B$. Thus $x\in Z$.

Now suppose $x\in T\setminus Z$. Then $T\simeq C_B(x)$; by Parts (\ref{lemma:item1}) and (\ref{lemma:item3}) we have $C_G(x)\simeq U_x T_x$, where the unipotent part $U_x$ is not almost contained in $B$ and the semisimple part $T_x$ is commensurable to $T$. For any $u_x\in C_{U_x}(x)\setminus B$ we have $x\in B\cap B^{u_x}$ and $x\in (T\cap T^{u_x})\setminus (Z\cup Z^{u_x})$. But the $U$-conjugates of $T\setminus Z$ are disjoint and there are $u,u'\in U$ with $T^u\ge B\cap B^{u_x}\le T^{u'u_x}$. It follows that $u,u'\in N_U(T)=Z$, so $T\simeq T^{u_x}$ and $u_x\in\widetilde N_G(T)$. Thus $\widetilde{N}_{U_x}(T_x)=\widetilde N_{U_x}(T)$ is infinite, a contradiction. As the conjugates of $T$ cover $B\setminus U$, we are done.

This finishes the proof of the lemma.\end{proof}

From now on, we use the following notation.
\begin{notation*}Let $G$ be a group. For $g\in G$ we denote $C_g:=\widetilde C_G(C_G(g))$. Note that if $G$ is finite-dimensional with additive and fine dimension and if ${\rm dim}(C_G(g))=1$, then $C_G(g) \lesssim C_g$ by Theorem~\ref{th:infinite-abelian}.
\end{notation*}

\begin{lemma}\label{lemma:Ci}Let $G$ be an almost simple pseudofinite finite-dimensional group with additive and fine dimension. Assume that ${\rm dim} (G) \leqslant 3$. If $g\in G$ satisfies ${\rm dim}(C_G(g))=1$, then $C_g \simeq C_G(g)$ and $\widetilde N_G(C_g)=N_G(C_g)$. Moreover, if $i,j \in N_G(C_g)\setminus C_g$ are distinct involutions with ${\rm dim}(C_G(i))={\rm dim}(C_G(j))=1$, then $j \in iC_g$.\end{lemma}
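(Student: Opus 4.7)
My plan is to handle the three assertions in order.

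\textbf{For (1)}, I will use the hint $C_G(g) \lesssim C_g$ from the notation and rule out ${\rm dim}(C_g) \ge 2$. Applying Fact~\ref{fact:com:sym} with $H = C_G(g)$ and $K = C_g = \widetilde{C}_G(H)$: the containment $K \lesssim \widetilde{C}_K(H) = K$ is trivial, so by symmetry $C_G(g) \lesssim \widetilde{C}_G(C_g)$, whence $C_G(g) \lesssim \widetilde{Z}(C_g)$ and ${\rm dim}(\widetilde{Z}(C_g)) \ge 1$. If ${\rm dim}(C_g) = 3$ then $C_g$ has finite index in $G$, so a second application of Fact~\ref{fact:com:sym} to the pair $(C_G(g), G)$ forces $C_G(g) \lesssim \widetilde{Z}(G) = 1$ by Lemma~\ref{lemma:almost}, contradicting ${\rm dim}(C_G(g)) = 1$. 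If ${\rm dim}(C_g) = 2$, then $C_g$ is infinite of infinite index in the almost simple group $G$, so by Lemma~\ref{lemma:almost} it is not almost normal; Lemma~\ref{lemma:nilp-by-finite} then forces $\widetilde{Z}(C_g)$ to be finite, a contradiction. So ${\rm dim}(C_g) = 1$ and $C_g \simeq C_G(g)$.

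\textbf{For (2)}, the commensurability from (1) implies $\widetilde{C}_G(C_g) = \widetilde{C}_G(C_G(g)) = C_g$: that is, $C_g$ equals its own almost centraliser. For $h \in \widetilde{N}_G(C_g)$ one has $C_g \simeq C_g^h$, and by conjugation-equivariance of the almost centraliser $C_g^h = \widetilde{C}_G(C_g^h) = \widetilde{C}_G(C_g) = C_g$, so $h \in N_G(C_g)$.

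\textbf{For (3)}, I will set $A := \widetilde{Z}(C_g)$; by Remark~\ref{remark:FItting} applied to the $1$-dimensional pseudofinite group $C_g$, $A$ has finite index in $C_g$ and $A' := [A,A]$ is finite, with both subgroups characteristic in $C_g$ and hence normalised by $i$ and $j$. Since $i \notin C_g = \widetilde{C}_G(C_g)$, the involution $i$ does not almost centralise $A$, so $C_A(i)$ is finite. A commutator-fibre argument (on any subgroup of $A$ sent by $a \mapsto [a,i]$ into the finite set $A'$, fibres are cosets of $C_A(i)$) then shows that the fixed-point subgroup of $i$ on the abelian quotient $\bar A := A/A'$ is also finite. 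Consequently the homomorphism $\bar a \mapsto \bar a \cdot (i\bar a i^{-1})^{-1}$ of $\bar A$ has finite kernel, and by additivity of dimension its image has finite index in $\bar A$; as the image lies in the $i$-inverted subgroup, $i$ inverts a finite-index subgroup of $\bar A$. The same applies to $j$, so $ij$ acts trivially on the intersection, itself of finite index in $\bar A$. Lifting through $A'$ by the same fibre argument applied to $ij$ yields $|A : C_A(ij)| < \infty$, hence $ij \in \widetilde{C}_G(A) = \widetilde{C}_G(C_g) = C_g$, giving $j \in iC_g$.

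The most delicate step will be (3): passing to the abelian quotient $A/A'$ is what allows one to speak of inversion by an involution, but the transitions between actual centralisation, centralisation modulo the finite group $A'$, and almost centralisation of $C_g$ must be tracked carefully, and at each step the finiteness of $C_A(i)$ (and of $A'$) is what makes the commutator-fibre arguments yield finite-index conclusions.
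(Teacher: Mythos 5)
Your proof is correct and follows essentially the same route as the paper's: for the first two assertions you rule out ${\rm dim}(C_g)\ge 2$ using Fact~\ref{fact:com:sym} together with $\widetilde Z(G)=1$ and Lemma~\ref{lemma:nilp-by-finite}, and for the third you pass to a finite-by-abelian quotient of $C_g$, show that $i$ and $j$ each invert a finite-index subgroup, and lift the resulting almost-centralisation of $ij$ back through the finite commutator subgroup; the paper's choice of characteristic pair $Z\le A$ from Remark~\ref{remark:FItting} and its pigeonhole lifting are interchangeable with your $A=\widetilde Z(C_g)$, $A'=[A,A]$ and commutator-fibre lifting. One tiny organisational point: your case split ``${\rm dim}(C_g)=3\Rightarrow$ finite index'' versus ``${\rm dim}(C_g)=2\Rightarrow$ infinite index'' tacitly assumes ${\rm dim}(G)=3$; it should be phrased as finite versus infinite index (as in the paper), but since your finite-index argument only uses $\widetilde C_G(C_g)=\widetilde Z(G)=1$ it covers ${\rm dim}(G)=2$, ${\rm dim}(C_g)=2$ verbatim.
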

\begin{proof}
Note that $C_G(g)$ is finite-by-abelian-by-finite, so $C_G(g)\lesssim C_g$. Suppose that $C_g$ had finite index in $G$. Since $C_g\lesssim \widetilde C_G(C_G(g)))$, by Fact \ref{fact:com:sym} we have $C_G(g)\lesssim\widetilde C_G(C_g)=\widetilde Z(G)$, contradicting triviality of $\widetilde Z(G)$. So $1\le {\rm dim}(C_g)< {\rm dim}(G) \leqslant 3$. Suppose ${\rm dim}(C_g)=2$. As $C_G(g)  \lesssim \widetilde{Z}(C_g)$ by Fact~\ref{fact:com:sym}, we get $\widetilde N_G(C_g)=G$ by Lemma \ref{lemma:nilp-by-finite}, contradicting almost normality. So ${\rm dim}(C_g)=1$ and $C_g \simeq C_G(g)$, whence $\widetilde N_G(C_g)=N_G(C_g)$. 

Now, let $i,j\in N_G(C_g) \setminus C_g$ be distinct involutions with $${\rm dim}(C_G(i))={\rm dim}(C_G(j))=1.$$ Let $Z \leqslant A$ be definable characteristic subgroups of $C_g$ so that $Z$ is finite and $A/Z$ is infinite and abelian (exist by Remark~\ref{remark:FItting}). By the above and our assumptions, ${\rm dim}(C_g)=1$, ${\rm dim}(C_G(i))=1$ and $C_G(i)\not\simeq C_G(g)$; hence $C_{A}(i)$ is finite, as is $C_{A/Z}(i)$. Then the subgroup of $A/Z$ inverted by $iZ$, say $B_i/Z$, contains $[r,A]Z/Z$ and thus is $1$-dimensional. Similar observations can be made for the group $B_j/Z$ of elements of $A/Z$ inverted by $jZ$; so $(B_i\cap B_j)/Z$ is infinite. Now $ijZ$ centralises $(B_i\cap B_j)/Z$; as $Z$ is finite there is $z\in Z$ such that $ijx_\ell ji=x_\ell z$ for infinitely many $x_\ell\in B_i\cap B_j$. Hence there is $1\neq x_0\in B_i\cap B_j$ so that $ij$ centralises $x_\ell x_0^{-1}$ and $C_{B_i\cap B_j}(ij)$ is infinite. Thus ${\rm dim}(C_{C_G(g)}(ij))=1$ and $ij\in C_g$, yielding $j \in iC_g$.
\end{proof}

\begin{lemma}\label{lemma:identification}Let $G$ be an almost simple pseudofinite finite-dimensional group with additive and fine dimension. Assume that ${\rm dim} (G) \leqslant 3$, and that, if ${\rm dim}(G)=3$, all definable subgroups of dimension $2$ are soluble-by-finite. Then ${\rm dim}(G)=3$ and $G$ has a definable subgroup of finite index isomorphic to ${\rm PSL}_2(F)$, where $F$ is a pseudofinite field.\end{lemma}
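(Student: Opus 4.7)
The plan is to produce, in either case $\dim G=2$ or $\dim G=3$, a definable subgroup of $G$ satisfying the hypothesis of Lemma~\ref{lemma:involutions}. Since the conclusion of that lemma forces $\dim G=3$ and identifies $\mathrm{PSL}_2(F)$ as a finite-index subgroup, this will both rule out $\dim G\le 2$ and yield the lemma in $\dim G=3$. First, $\dim G=1$ is impossible by Theorem~\ref{th:infinite-abelian} and almost simplicity, so $\dim G\in\{2,3\}$. By Lemma~\ref{lemma:involutions}, $G$ has an involution $i$, and Fact~\ref{hempel-palacin} together with $\widetilde Z(G)=1$ (Lemma~\ref{lemma:almost}) forces $1\le\dim C_G(i)\le\dim G-1$.

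I distinguish two cases based on centraliser dimension. In Case~A, some involution $i$ has $\dim C_G(i)=2$ (so $\dim G=3$ and $C_G(i)$ is soluble-by-finite by hypothesis). Applying Lemma~\ref{lemma:dim2subgroups} to $C_G(i)$ gives a definable dim-$2$ subgroup $B=\widetilde N_G(C_G(i))=UT$ with finite centre $Z\ni i$. If $B$ is strongly embedded, Lemma~\ref{lemma:involutions}(2) finishes; otherwise Lemma~\ref{lemma:dim2subgroups}(4) produces an involution $j\in T\setminus Z$ and Lemma~\ref{lemma:dim2subgroups}(5) forces $\dim C_G(j)=1$, reducing us to Case~B. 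In Case~B, every involution has dim-$1$ centraliser; fix one, set $T:=C_i$, and observe via Lemma~\ref{lemma:Ci} that $T\simeq C_G(i)$, $N_G(T)=\widetilde N_G(T)$ is dim-$1$ definable, and that every involution $j'\in T$ satisfies $C_{j'}=T$ (since $j'\in T=\widetilde C_G(C_G(i))$ yields $C_G(i)\lesssim C_G(j')$ and both centralisers are dim $1$).

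I would then prove that $N_G(T)$ is a strongly embedded subgroup of $G$. For $g\notin N_G(T)$ and a putative involution $j\in N_G(T)\cap N_G(T^g)$, I would case-split on whether $j\in T$ and whether $j\in T^g$. If $j\in T\cap T^g$ then $T=C_j=T^g$, contradicting $g\notin N_G(T)$; if $j\in T\setminus T^g$ then $j$ normalises $T^g$ non-trivially (since centralising would force $T^g\simeq T$), placing $j$ in the unique Weyl-type coset of $T^g$ in $N_G(T^g)$ from Lemma~\ref{lemma:Ci}, which combined with $j\in T$ yields a coset-compatibility constraint that I would show is incompatible with the analogous description of the Weyl-type involutions in $N_G(T)\setminus T$; the mixed case $j\notin T\cup T^g$ is handled by combining the two Weyl-coset descriptions. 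Once $N_G(T)$ is known to be strongly embedded, Lemma~\ref{lemma:involutions}(2) identifies $\mathrm{PSL}_2(F)$ and forces $\dim G=3$, ruling out the dim-$2$ scenario. The principal obstacle is the strong-embedding argument in Case~B: the Weyl-type involutions of $N_G(T)\setminus T$ and $N_G(T^g)\setminus T^g$ interact subtly, and extracting the required coset incompatibility will need a careful combination of Lemma~\ref{lemma:Ci} with further structural information about the dim-$1$ centralisers of $G$.
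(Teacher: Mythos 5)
Your overall plan is sound---reduce to the hypotheses of Lemma~\ref{lemma:involutions}---but the route you propose in Case~B has a genuine gap that cannot be patched in the way you describe, because the statement you want to prove there is false in the model situation.

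In Case~B (all involutions have dimension-$1$ centraliser) you want to show that $N_G(T)$, where $T = C_i$, is strongly embedded, and then invoke Lemma~\ref{lemma:involutions}(2). But think of $G = {\rm PSL}_2(F)$ with $F$ pseudofinite of odd characteristic: there, all involution centralisers are one-dimensional (dihedral tori, not Borels), yet $G$ has no strongly embedded subgroup at all, since the Sylow $2$-structure is dihedral and an involution lies in many conjugates of the torus normaliser. So the strong-embedding claim for $N_G(T)$ in Case~B is simply not available, and the ``Weyl-coset incompatibility'' you hope to extract does not exist. This is precisely why Lemma~\ref{lemma:involutions} has two alternative hypotheses: in odd characteristic the correct clause is $m_2(G)\leqslant 2$, not strong embedding. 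The paper's proof therefore does something structurally different in this regime: assuming for contradiction that $m_2(G)>2$ and that no strongly embedded subgroup exists, it establishes (Claims~2--5 in the paper) that all involutions in $C_i$ with dim-$1$ centraliser pairwise commute and satisfy the relation $xyz=1$, which bounds the $2$-rank, and then derives a contradiction from $m_2(G)>2$. That is, the argument recovers $m_2(G)\leqslant 2$ rather than producing a strongly embedded subgroup. You would need to redirect the whole of Case~B towards that goal (in effect reproducing the paper's Claims~2--4) and treat the strong-embedding branch as a secondary alternative rather than the main route.

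A smaller imprecision: in Case~A you say that finding one involution $j$ with $\dim C_G(j)=1$ ``reduces to Case~B,'' but Case~B as you have set it up requires every involution to have dim-$1$ centraliser. Passing from ``there exists such an involution'' to ``all involutions are such'' is itself nontrivial (it is Claim~5 in the paper, proved using the earlier claims about commuting involutions in $C_i$), so the reduction as stated is circular. Your use of Lemma~\ref{lemma:dim2subgroups} to produce $B=UT$ and an involution in $T\setminus Z$ is correct and matches the paper's Claim~1, but the subsequent handover to Case~B needs the full machinery, not just one witness.
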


\begin{proof} Note first that $G$ is semisimple and has trivial almost centre by Lemma \ref{lemma:almost}; moreover it has dimension at least $2$. If ${\rm dim}(G)=2$ then all centralisers of non-trivial elements have dimension at most $1$ by Theorem~\ref{th:commensurable}. Note that we may assume that $m_2(G)>2$, that $G$ has no strongly embedded definable subgroup and that $G$ has involutions (Lemma~\ref{lemma:involutions}). Further, by Fact \ref{hempel-palacin} all involutions of $G$ have infinite centralisers.

\begin{claim}\label{claim:1} There is an involution $i\in G$ with ${\rm dim}(C_G(i))=1$, and for any such involution there is an involution $k\in N_G(C_i)\setminus C_i$. Moreover, if there is an involution $j\in G$ with ${\rm dim}(C_G(j))=2$, then we can choose $k\in C_G(i)\setminus C_i$.\end{claim}
\begin{proofclaim} Note first that if $i\in G$ is an involution with ${\rm dim}(C_G(i))=1$, then ${\rm dim}(\widetilde N_G(C_i))=1$. For otherwise, setting $H=\widetilde N_G(C_i)$ and $B=\widetilde N_G(H)$ we have ${\rm dim}(H)={\rm dim}(B)=2$. As ${\rm dim}(\widetilde N_B(C_i))=2$ we must have $C_i\simeq U$ in the notation of Lemma \ref{lemma:dim2subgroups}, so $i\in C_i=\widetilde C_G(C_i)=\widetilde C_G(U)=U$. Moreover ${\rm dim}(C_G(i))=1$ implies $i\notin \widetilde{Z}(B)$. But then $B$ is strongly embedded by Lemma \ref{lemma:dim2subgroups}(\ref{lemma:item4}), a contradiction.

Next, suppose that ${\rm dim}(C_G(i))=1$ for all involutions $i\in G$, and fix some involution $i$. Put $H=\widetilde N_G(C_G(i))$. Then $C_G(i)\lesssim C_i\le H$ are all commensurable, so $H=\widetilde N_G(H)$. Since $H$ is not strongly embedded, there is $g\in G\setminus H$ and an involution $k\in H\cap H^g$; if $k\in C_i\cap C_i^g$ then $C_G(k)$ almost contains $C_i$ and $C_i^g$; as $g\notin H=\widetilde N_G(C_i)$ the intersection $C_i\cap C_i^g$ is finite, so $C_G(k)$ has dimension $2$, a contradiction. If $k\notin C_i$ we are done, otherwise $k\notin C_i^g$ and $k^{g^{-1}} \notin C_i$ is as required.

Now suppose that $j\in G$ is an involution with ${\rm dim}(C_G(j))=2$. Note that this implies ${\rm dim}(G)=3$, as $\widetilde Z(G)=1$. We take $L=C_G(j)$ in Lemma \ref{lemma:dim2subgroups}, so $B=\widetilde N_G(L)$ and $Z=\widetilde Z(B)$. Then $j\in Z\le B$, so by Lemma \ref{lemma:dim2subgroups}(\ref{lemma:item4}) there is an involution $i\in T\setminus Z$, and we have ${\rm dim}(C_G(i))=1$ by Lemma \ref{lemma:dim2subgroups}(\ref{lemma:item5}). This shows the first part of the claim.

Let $i\in G$ be any involution with ${\rm dim}(C_G(i))=1$ and put $N=\widetilde N_G(C_i)$. Consider the conjugacy class $j^G$, which has dimension $1$. Recall that in any finite group two involutions are either conjugate or there is an involution commuting with both of them; by \L o\'s' Theorem this also holds in $G$. Clearly $i$ and $j^g$ are not conjugate for any $g\in G$, so for each $g\in G$ there is an involution $k_g\in C_G(i)\cap C_G(j^g)$.

We want to show that $k_g\notin C_i$ for some $g\in G$. So suppose otherwise. Now either ${\rm dim}(C_G(k_1))=2$, and we can take $B=\widetilde N_G(C_G(k_1))$. Then $i\in C_G(k_1)\le B$, so $C_B(i)$ is infinite, as is $C_i\cap B$. If $C_i\cap B\simeq U$, then  $U=\widetilde C_G(U)=\widetilde C_G(C_i)=C_i$ contains an involution $i$ with ${\rm dim}(C_G(i))=1$, so $i\notin Z$ and $B$ is strongly embedded, a contradiction. Hence $C_i\cap B=\widetilde C_G(C_i)\cap B=\widetilde C_G(T)\cap B=T$.

Otherwise ${\rm dim}(C_G(k_1))=1$, so $C_i=C_{k_1}$ and we can take $B=\widetilde N_G(C_G(j))$. The same argument (with $k_1$ and $j$ instead of $i$ and $k_1$) yields that $C_i\cap B=T$.
But $C_i$ is finite-by-abelian, $T$ has finite index in $C_i$ and $T$ contains only finitely many involutions (Lemma \ref{lemma:dim2subgroups}(\ref{lemma:item2})). Hence $C_i$ only contains finitely many involutions, and there is an involution $k\in C_i$ which commutes with infinitely many distinct conjugates~$j^g$.

Fix $j'\in C_G(k)\cap j^G$, put $B'=\widetilde N_G(C_G(j'))$ and $Z'=\widetilde Z(B')\ni j'$, and let $U'$ and $T'$ denote the unipotent and a semisimple subgroups of $B'$. Then $k\in C_G(j')\le B'$. Put $X=\{x\in G:k\in B'^x\}$. Then $X\cap j^G\supseteq C_G(k)\cap j^G$ is infinite; since $B'\cap j^G\subseteq Z'$ is finite, there is $x_0\in X\setminus B'$, and we may choose $T'\le B'$ so that $k\in B'\cap B'^{x_0}\le T'$ (recall that $\widetilde N_G(B')=B'$). As $Z'\cap B'\cap B'^{x_0}=1$ we have $k\notin Z'$, and $T'$ is the unique $U'$-conjugate of $B'$ containing $k$. Hence $T'$ does not depend on $x_0$. But for any $x\in X$ there is $u\in U'$ such that $B'\cap B'^x\le T'^u$, so clearly $T'^u=T'$. As $k$ is in some $U'^x$-conjugate of $T'^x$, there is $u'\in U'$ with $k\in T'^{u'x}\le B'^x$, so $T'\ge B'\cap B'^x\le T'^{u'x}$, whence $T'\simeq T'^{u'x}$ and $u'x\in\widetilde N_G(T')$. 
Moreover, if $x\in C_G(k)$ then we can take $u'=1$. Hence $C_G(k)\cap j^G\subseteq \widetilde N_G(T')$. It follows that $\widetilde N_G(T')$ cannot have dimension $2$, since this would imply $C_G(k)\cap j^G\subseteq\widetilde Z(\widetilde N_G(T'))$ which is finite. Therefore ${\rm dim}(\widetilde N_G(T'))=1$.

Put $B^\sharp=\bigcap_{g\in\widetilde N_G(T')}(T')^g$, a normal subgroup of $\widetilde N_G(T')$ of finite index contained in all $B'^x$ for $x\in X$. Now $k\notin Z'$, so $C_G(k)\simeq T'$ and $C_G(k)\lesssim B^\sharp$. Hence there is fixed $j'^{x'}\in C_G(k)$ and infinitely many distinct $j'^{x_\ell} \in C_G(k)$ so that $j'^{x'}=j'^{x_\ell}t_\ell$ for some $t_\ell\in B^\sharp$. Now $j'^{x'}\in B'^{x'}$, whence $k\in C_G(j'^{x'})\le B'^{x'}$ and $x'\in X$. Also $t_\ell\in B^\sharp\le B'^{x'}$, whence $j'^{x_\ell}\in B'^{x'}$. So $j'^{x_\ell}\in \widetilde{Z}(B'^{x'})$ by Lemma~\ref{lemma:dim2subgroups}(\ref{lemma:item5}), contradicting finiteness of $\widetilde{Z}(B'^{x'})$. This shows that there is an involution $k\in C_G(i)\setminus C_i$ commuting with $i$.\end{proofclaim}
We now fix an involution $i\in G$ with ${\rm dim}(C_G(i))=1$ and put $N=N_G(C_i)$.
\begin{claim}\label{claim:2}An involution $k\in N\setminus C_i$ centralises a unique involution $\ell\in C_i$, and ${\rm dim}(C_G(k))={\rm dim}(C_G(\ell))=1$.\end{claim}
\begin{proofclaim} In a finite group $N$, if $C$ is normal in $N$ and both $C$ and $N\setminus C$ contain involutions, then for any involution $k\in N\setminus C$ a Sylow 2-subgroup $P$ containing $k$ must intersect $C$ non-trivially, as all Sylow 2-subgroups are conjugate. Then $Z(P)\cap C$ is non-trivial by nilpotency of $P$, and $C$ contains an involution $j$ commuting with $k$. By \L o\'s' Theorem, the same holds for $N> C_i$, and there is an involution $\ell\in C_i$ commuting with $k$.

Next we show ${\rm dim}(C_G(k))={\rm dim}(C_G(\ell))=1$. This is clear if ${\rm dim}(G)=2$, so suppose ${\rm dim}(G)=3$.

Assume first that ${\rm dim}(C_G(k))=2$. Then ${\rm dim}(\widetilde N_G(C_G(k)))=2$ by almost simplicity of $G$. If ${\rm dim}(C_G(\ell))=2$, then, as $\ell\in C_G(k)\le \widetilde N_G(C_G(k))$, we get $\ell\in\widetilde Z(N_G(C_G(k)))$.
Hence $C_G(i)\lesssim C_G(\ell)\simeq C_G(k)$, contradicting $k\notin C_i$. If ${\rm dim}(C_G(\ell))=1$, then $C_G(i)\simeq C_G(\ell)\lesssim C_G(k)$, again a contradiction. Thus ${\rm dim}(C_G(k))=1$.

Next, suppose ${\rm dim}(C_G(\ell))=2$ and put $B''=\widetilde N_G(C_G(\ell))$. Then $i,k\in C_\ell \le B''$, so $C_G(i)$ and $C_G(k)$ are commensurable to two different $B''$-conjugates of the semisimple part $T''$ of $B''$. As $k\notin\widetilde Z(B'')$, this implies $k\notin N$, since $\widetilde{N}_{B''}(T'')=T''$. Hence ${\rm dim}(C_G(\ell)) =1$. 

Finally, we show uniqueness of $\ell$. So assume that $k$ fixes another involution $\ell'\in C_i$. Then ${\rm dim}(C_G(\ell'))=1$; by Lemma \ref{lemma:Ci} we have $\ell'\in\ell C_k$, so $\ell\ell'\in C_k$ and $C_G(k) \lesssim  C_G(\ell \ell')$. Moreover $C_G(i) \lesssim  C_G(\ell \ell')$. So $C_G(\ell \ell')$ is $2$-dimensional. If ${\rm dim}(G)=2$ this is a contradiction; if ${\rm dim}(G)=3$ we put $B'''=\widetilde N_G(C_G(\ell\ell'))$. Then $C_G(k)$ and $C_G(\ell) \simeq C_G(i)$ are commensurable to two different $B'''$-conjugates of the semisimple part $T'''$ of $B'''$, which again implies $k\notin N$. Thus $k$ centralises a unique involution in $ C_i$.\end{proofclaim}

\begin{claim}Any two involutions $x,y$ in $C_i$ with ${\rm dim}(C_G(x))={\rm dim}(C_G(y))=1$ commute.\end{claim}
\begin{proofclaim}
Let $k_x,k_y \in N\setminus C_i$ be involutions commuting with $x,y$ respectively. Now $C_i$ is normal in $N$, so $y$ and $k_x$ are not $N$-conjugate, and there is an involution $z\in N$ commuting with both. If $z\in C_i$ then, as it is centralised by $k_x$, we have $z=x$, so $x$ and $y$ commute. If $z\in N\setminus C_i$ then either $z=k_x$, or $zk_x$ is an involution in $ C_i$ centralised by $k_x$, whence $zk_x=x$. In either case $z$ commutes with $x$ and with $y$, whence $x=y$. So $x$ and $y$ commute.\end{proofclaim}

\begin{claim}\label{claim:4}If $x,y,z\in C_i$ are three distinct involutions with $${\rm dim}(C_G(x))={\rm dim}(C_G(y))={\rm dim}(C_G(z))=1,$$ then $xyz=1$.\end{claim}
\begin{proofclaim} Let $k\in N\setminus C_i$ be an involution commuting with $x$. Then $y$ and $y^k$ are distinct commuting involutions in $ C_i$. Therefore $yy^k=(yk)^2$ is an involution in $ C_i$ fixed by $k$, whence $(yk)^2=x$. Similarly, $(zk)^2=x$, and $k^y=k^z=xk$. So $yz$ is an involution in $ C_i$ commuting with $k$, whence $yz=x$ and $xyz=1$.\end{proofclaim}

\begin{claim}\label{claim:5}${\rm dim}(C_G(j))=1$ for any involution $j\in G$.\end{claim}
\begin{proofclaim} Suppose, as in the proof of Claim \ref{claim:1}, that $j$ is an involution with ${\rm dim}(C_G(j))=2$. Note that we may assume that $i,j\in B$ commute (where $B$ is again as in Claim \ref{claim:1}, whence as in Lemma~\ref{lemma:dim2subgroups}): If not, then there is a third involution $z\in C_G(j)\cap C_G(i)$. If $z\in\widetilde Z(B)$ we can replace $j$ by $z$. Otherwise $z\in T\setminus Z$, and we can replace $i$ by $z$. Note that $ij\in T\setminus\widetilde Z(B)$, so ${\rm dim}(C_G(ij))=1$.

Consider $k\in C_G(i)\setminus C_i$  (exists by Claim~\ref{claim:1}). Then $B\cap B^k\simeq T$, and $\ell=j^k\in( C_i\cap C_G(i))\setminus B$. Then $i$, $ij$ and $i\ell$ are distinct involutions in $ C_i$ with ${\rm dim}(C_G(i))={\rm dim}(C_G(ij))={\rm dim}(C_G(i\ell))=1$. Then, by the above, $iiji\ell=1$ whence $j=i\ell$, but $C_G(j)$ is $2$-dimensional, a contradiction.\end{proofclaim}

Now, since $m_2(G)>2$ there are three pairwise commuting distinct involutions $x,y,z$ with $xyz\not=1$. By Claim \ref{claim:4} and~\ref{claim:5} we cannot have $C_x=C_y=C_z$.
If $C_x=C_y\not=C_z$, then $z\in N_G(C_x)\setminus C_x$ centralises two distinct involutions $x,y\in C_x$, contradicting Claim \ref{claim:2}. If $C_x\not=C_y\not= C_z\not= C_x$, then $y,z\in N_G(C_x)$, so $y\in zC_x$ by Lemma \ref{lemma:Ci}, and $yz\in C_x$ is an involution centralised by $y$, whence $yz=x$ again by Claim \ref{claim:2}, contradicting our hypothesis.

This finishes the proof.
\end{proof}

\subsection{Proof of Theorem~\ref{th:rank2}}
\begin{customthm}{1.2}Let $G$ be a finite-dimensional pseudofinite group with additive and fine dimension. If ${\rm dim}(G) = 2$ then $G$ is soluble-by-finite. Moreover, if $G$ is not finite-by-abelian-by-finite, then there is a definable normal subgroup $N$ of $G$ with ${\rm dim}(N)=1$.
\end{customthm}

\begin{proof} Suppose first that $G$ has a definable normal subgroup $N$ of dimension $1$. Then $N$ is finite-by-abelian-by-finite. In fact $\widetilde Z(N)'$ is finite normal in $G$, and $C_N(\widetilde Z(N)')$ is a normal $2$-nilpotent subgroup of $G$ of dimension $1$. The quotient $\overline G=G/C_N(\widetilde Z(N)')$ is also of dimension $1$, whence finite-by-abelian-by-finite. Put $\overline S=C_{\overline G}(\widetilde Z(\overline G)')$. Then $\overline S$ is $2$-nilpotent of finite index in $\overline G$, and its preimage $S$ is soluble of finite index in $G$. Thus $G$ is soluble-by-finite and has a definable normal subgroup of dimension $1$.

Now suppose that there is no definable normal subgroup of dimension $1$. 
Every finite normal subgroup of $G$ is contained in $\widetilde Z(G)$, which is finite-by-abelian.

If ${\rm dim}(\widetilde Z(G))=2$ we are done and $G$ is finite-by-abelian-by-finite.
So suppose that $\widetilde Z(G)$ is finite. Then $\overline G=G/\widetilde Z(G)$ has no non-trivial finite normal subgroup. Replacing $G$ by $\overline G$, we may thus assume that $G$ is almost simple. So we may apply Lemma~\ref{lemma:identification}, which implies that $G$ has dimension $3$, a contradiction.\end{proof}

\subsection{Proof of Theorem~\ref{th:main}}

\begin{customthm}{1.1}[Without CFSG]
Let $G$ be a pseudofinite finite-dimensional group with additive and fine dimension. If ${\rm dim} (G) = 3$, then either $G$ is soluble-by-finite, or $\widetilde Z(G)$ is finite and $G/\widetilde Z(G)$ has a definable subgroup of finite index isomorphic to ${\rm PSL}_2(F)$ where $F$ is a  pseudofinite field.\end{customthm}

\begin{proof} Suppose first that $G$ has a definable normal subgroup $N$ with $1\le{\rm dim}(N)\le 2$. Then $1\le{\rm dim}(G/N)\le 2$, so by Theorem \ref{th:rank2} both $N$ and $G/N$ are soluble-by-finite. Put $S={\rm Rad}(N)$, which is characteristic in $N$ and hence normal in $G$. Then $C_G(N/S)$ is a definable normal soluble subgroup of finite index in $G$.

So suppose that $G$ has no definable normal subgroup of dimension $1$ or $2$. If ${\rm dim}(\widetilde Z(G))=3$, then $G$ is finite-by-abelian-by-finite. Otherwise $\widetilde Z(G)$ is finite and contains all normal finite subgroups. Replacing $G$ by $G/\widetilde Z(G)$, we may assume that $G$ is almost simple. By Theorem \ref{th:rank2} all definable subgroups of dimension $2$ are soluble-by-finite. So we finish by Lemma \ref{lemma:identification}.\end{proof}

We finish the paper by stating the immediate corollary of Theorem~\ref{th:main}.

\begin{corollary}[Without CFSG]\label{corol:main}Let $G$ be a pseudofinite non-abelian definably simple finite-dimensional group with additive and fine dimension. If ${\rm dim} (G) = 3$, then $G\cong {\rm PSL}_2(F)$ for some pseudofinite field $F$. \end{corollary}

\addcontentsline{toc}{section}{References}    
\bibliographystyle{plain}

\end{document}